     \def\section{\@startsection{section}{1}%
     \z@{.7\linespacing\@plus\linespacing}{.5\linespacing}%
     {\bfseries
     \centering
     }}
     \def\@secnumfont{\bfseries}
   \newtheorem{theorem}{Theorem}[section]
\newtheorem{lemma}[theorem]{Lemma}
\newtheorem{corollary}[theorem]{Corollary}
\newtheorem{proposition}[theorem]{Proposition}
\theoremstyle{definition}
\newtheorem{remark}[theorem]{Remark}
\numberwithin{equation}{section}
\def \a{{\alpha}}
\def \b{{\beta}}
\def \d{{\delta}}
\def \e{{\varepsilon}}
\def \k{{\kappa}}
\def \l{{\lambda}}
\def \p{{\varphi}}
\def \t{{\vartheta}}
\def \m{{\mu}}
\def \s{{\sigma}}
\def \B{{\mathcal B}}
\def \qq{{\qquad}}
\def\beq{\begin{equation}}
\def\eeq{\end{equation}}
  \def\ben{\begin{eqnarray}}
\def\een{\end{eqnarray}}
\def\E{{\mathbb E \,}}
\def\P{{\mathbb P}}
\def\R{{\mathbb R}}
\def\Z{{\mathbb Z}}
\def\N{{\mathbb N}}
  \font\sevenrm= cmr10 at 7 pt
\def\ddate {\sevenrm \ifcase\month\or January\or
February\or March\or April\or May\or June\or July\or
August\or September\or October\or November\or December\fi\! {\the\day}, \!{\sevenrm\the\year}}
\date{\today}
\title[$\textit{A strenghtened asymptotic  uniform distribution property}$]
{Strenghtened asymptotic  uniform distribution property and  Rozanov's Theorem} 
\title[$\textit{Rozanov's Theorem and strenghtened asymptotic  uniform distribution}$]
{On  Rozanov's Theorem and   strenghtened asymptotic  uniform distribution} 
\begin{document}
  \author{Michel  J.\,G. WEBER}
\address{IRMA, UMR 7501, Universit\'e
Louis-Pasteur et C.N.R.S.,   7  rue Ren\'e Descartes, 67084
Strasbourg Cedex, France.
   E-mail:    {\tt  michel.weber@math.unistra.fr}}
    
\keywords{Local limit theorem, asymptotic  uniform distribution, Rozanov's Theorem, divisors, Bernoulli random variables, i.i.d. sums, Theta functions. \vskip 1pt 2010 \emph{Mathematics Subject Classification}: {Primary: 60F15, 60G50 ;
Secondary: 60F05}.}

\begin{abstract}    For sums $S_n=\sum_{k=1}^n X_k$, $n\ge 1$ of independent random variables    $  X_k  $   taking values in
$\Z$
we prove, as a consequence of a more general result,  that 
if (i) For some function $1\le \phi(t)\uparrow \infty $ as $t\to \infty$, and some constant $C$, we have for all $n$ and $\nu\in \Z$,
 \begin{equation*}\label{abstract1}
\big|B_n\P\big\{ S_n=\nu\big\}- {1\over   \sqrt{ 2\pi } }\ e^{-
{(\nu-M_n)^2\over  2 B_n^2} }\big|\,\le \,  {C\over \,\phi(B_n)},
 \end{equation*}
then (ii) There exists a numerical constant $C_1$, such that for   all    $n $ such that $B_n\ge 6$,   all $h\ge 2$, 
  and $\m=0,1,\ldots, h-1$,
\begin{align*}\label{abstract1}
   \Big|{\mathbb P}\big\{ S_n\equiv\,  \m\ \hbox{\rm{ (mod $h$)}}\big\}- \frac{1}{h}\Big|
  \le   {1\over \sqrt{2\pi}\,  B_n   }+\frac{1+ 2 {C}/{h}
}{ \phi(B_n)^{2/3} } 
+  C_1 \,e^{-(1/ 16 )\phi(B_n)^{2/3}}.
\end{align*}  
Assumption (i) holds if a local limit theorem in the usual form is applicable, and  (ii)  yields a  strenghtening of  Rozanov's necessary condition. 

      Assume in place of (i) that $\t_j =\sum_{k\in \Z}{\mathbb P}\{X_j= k\}\wedge{\mathbb P}\{X_j= k+1 \} >0$,  for each $j$ and that 
 $\nu_n =\sum_{j=1}^n \t_j\uparrow \infty$.
We prove   strenghtened forms of the asymptotic  uniform distribution property.  (iii) Let $\a\!>\!\a'\!>\!0$, $0\!<\!\e\!<\!1$. Then for   each $n$ such that 
$$|x|\le\frac12 \big(  \frac{ 2\a\log (1-\epsilon)\nu_n}{ (1-\epsilon)\nu_n }\big)^{1/2}\qq \Rightarrow \qq{\sin x\over
x}\ge (\a^\prime/\a)^{1/2},$$
 we have
 \begin{eqnarray*} \sup_{u\ge 0}\,\sup_{d<  \pi  
 (   {(1-\epsilon)\nu_n \over 2\a\log (1-\epsilon)\nu_n})^{1/2}
 } \ \big| \P \{d|S_n+u  \}   -  {1\over d} \big|
    \,\le \,2 \,e^{- \frac{\epsilon^2 }{2}\nu_n}+
 \,\big( (1-\epsilon)\nu_n\big)^{-\a'}  .
\end{eqnarray*}
(iv) Let $0<\rho<1 $ and   $0<\e<1$. The sharper uniform bound $2 e^{- \frac{\epsilon^2 }{2}\nu_n}+e^{- ( (1-\epsilon)\nu_n)^\rho}$ is also proved (for a corresponding $d$-region of divisors),   for each   $n$ such that $$|x|\le\frac12 \,\big(  \frac{ 2 }{ ((1-\epsilon)\nu_n)^{1-\rho} }\big)^{1/2}\qq \Rightarrow \qq{\sin x\over
x}\ge \sqrt{1-\e}.$$     \end{abstract}

 \maketitle 
 

\section{\bf  Local limit theorem and asymptotic  uniform distribution.}\label{s1}

  Let $X=\{X_i , i\ge 1\}$ be  a sequence of  independent  variables taking values in $\Z$, 
and let $S_n=\sum_{k=1}^n X_k$, for each $n$. 
\vskip 3 pt  The sequence  $X$
 is said to be {asymptotically uniformly distributed with respect
to lattices of span $d$}, in short a.u.d.($d$), if for $m = 0,1,\ldots,d-1$, we
have
 \beq \label{aud1}\lim_{n\to \infty}\ \P\{S_n \equiv m \,{\rm (mod)}\,d\}=\frac1d.
 \eeq
Equivalenty  for $m = 0,1,\ldots,d-1$, we
have 
 \begin{equation}\label{uad.lim1}\lim_{n\to \infty}\ \P\{d|S_n-m\}=\frac1d.
 \end{equation}  The sequence $X$ is  {asymptotically uniformly distributed}, in short a.u.d., if \eqref{aud1}  holds true for any $d\ge 2$ and $m = 0,1,\ldots,d-1$.

\vskip 5 pt 
\vskip 3 pt 
   
Dvoretsky and Wolfowitz \cite{DW} proved the following characterization. Assume that $X$ is composed with independent random variables taking   only the values 
$$ 0, 1,\ldots, h-1.$$
  In order that the partial sums $\{S_n, n\ge 1\}$  be a.u.d.($h$), it is necessary and sufficient that
\beq \label{aud.dw.ns} \prod_{k=1}^\infty\bigg( \sum_{m=0}^{h-1}\P\{X_k=m\}\,e^{\frac{2i\pi }{h}rm}\bigg) \,=\, 0, \qq \quad (r=1,\ldots, h-1).
 \eeq
Equivalently,
\beq \label{aud.dw.ns.} \prod_{k=1}^\infty\big(\E  e^{\frac{2i\pi }{h}rX_k}\big) \,=\, \lim_{N\to \infty}  \big(\E  e^{\frac{2i\pi }{h}rS_N}\big) \,=\, 0, \qq \quad (r=1,\ldots, h-1).
 \eeq

 This notion plays an important role in the study of the local limit theorem. Let us assume   that the random variables $X_k$ take values in a common lattice $\mathcal L(v_{0},D )$, namely defined by the
 sequence $v_{ k}=v_{ 0}+D k$, $k\in \Z$,   $v_{0} $ and $D >0$ being  reals, and are  square integrable,  
 and let     
 \beq \label{MnBn}M_n= {\mathbb E\,} S_n , \qq B_n^2={\rm Var}(S_n)\to \infty.
 \eeq

\vskip 20pt
We say  that the local limit theorem (in the usual form) is applicable to   $X$    if
 \begin{equation}\label{def.llt.indep}    \sup_{N=v_0n+Dk }\Big|B_n\, {\mathbb P}\{S_n=N\}-{D\over  \sqrt{ 2\pi } }e^{-
{(N-M_n)^2\over  2 B_n^2} }\Big| = o(1), \qq \quad n\to\infty.
\end{equation}
When the random variables $X_i$ are identically distributed,    \eqref{def.llt.indep} reduces to   \begin{equation}\label{llt.iid}    \sup_{N=v_0n+Dk }\Big|  \s \sqrt{n}\, {\mathbb P}\{S_n=N\}-{D\over  \sqrt{ 2\pi } }e^{-
{(N-n\m)^2\over  2 n\s^2} }\Big| = o(1),
\end{equation}
where  $\m={\mathbb E\,} X_1$, $\s^2={\rm Var}(
X_1)$. By Gnedenko's Theorem \cite{G}, see also \cite{P}, p.\,187,  \cite{SW}, Th.\,1.4, \eqref{llt.iid}   holds
if and only if the span $D$ is maximal (there are no  other real numbers
$v'_{0}
$ and
$D' >D$ for which
${\mathbb P}\{X
\in\mathcal L(v'_0,D')\}=1$).

Note that  the transformation
\begin{equation}\label{llt.transf.}
 X'_j= \frac{X_j-v_0}{D},
  \end{equation}
allows one to reduce  to the case  $v_0=0$, $D=1$.
 
\begin{remark}Note that   the series (in $k$)
 \begin{equation}\label{def.llt.indep.sum}    \sum_{N=v_0n+Dk }  \Big( {\mathbb P}\{S_n=N\}-{D\over  \sqrt{ 2\pi } B_n}e^{-
{(N-M_n)^2\over  2 B_n^2} } \Big),
\end{equation}
is  obviously convergent, whereas  
nothing can be deduced concerning its   order  from the very definition of the local limit theorem. Further by using Poisson summation formula
the series associated to the second summand  verifies
 \begin{equation}
 \label{def.llt.indep.poisson} 
  \sum_{N=v_0n+Dk } {D\over  \sqrt{ 2\pi } B_n}e^{-
{(N-M_n)^2\over  2 B_n^2} }\,=\,\sum_{\ell \in\Z} e^{2i\pi \ell \{\frac{v_0n-M_n}{D}\}-\frac{2\pi^2\ell^2 B_n^2}{D^2}},
\end{equation}
and so is 
$ 1+\mathcal O(D/B_n)$, whereas the one associated to the first  is   1. Therefore
 \begin{equation}\label{def.llt.indep.sum.}    \sum_{N=v_0n+Dk } 
 \Big(  {\mathbb P}\{S_n=N\}-{D\over  \sqrt{ 2\pi } B_n}e^{-
{(N-M_n)^2\over  2 B_n^2} }\Big)\,=\, \mathcal O(D/B_n).
\end{equation}
 \end{remark}
 When a strong local limit theorem with convergence in variation holds   we  have the more informative result 
 \beq \label{sllt1}
 \lim_{n\to\infty}
  \sum_{N=v_0n+Dk }\Big|  {\mathbb P}\{S_n=N\}-{D\over  \sqrt{ 2\pi }B_n }e^{-
{(N-M_n)^2\over  2 B_n^2} }\Big| =0.
 \eeq

\vskip 20pt The following result is well-known.
\begin{theorem}[Rozanov] \label{l1}    Let $X=\{X_i , i\ge 1\}$ be  a sequence of  independent  variables taking values in $\Z$,  
and let $S_n=\sum_{k=1}^n X_k$, for each $n$.  The local limit theorem is applicable to   $X$  only if $X$ satisfies the a.u.d. property.
 \end{theorem}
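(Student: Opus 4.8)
The plan is to pass to exponential sums and to feed the local limit theorem into the Poisson summation formula, in the spirit of the Remark preceding the statement.

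By the normalisation \eqref{llt.transf.} I would first reduce to the case $v_0=0$, $D=1$, in which $S_n$ is $\Z$-valued of (maximal) span $1$ and \eqref{def.llt.indep} provides reals $\delta_n\to 0$ with
$$\sup_{N\in\Z}\Big|\,\P\{S_n=N\}-{1\over\sqrt{2\pi}\,B_n}\,e^{-(N-M_n)^2/(2B_n^2)}\Big|\ =\ {\delta_n\over B_n}.$$
Fix $h\ge 2$, $m\in\{0,\dots,h-1\}$ and set $\omega=e^{2i\pi/h}$. The orthogonality relations for the $h$-th roots of unity give
$$\P\{S_n\equiv m\ (\mathrm{mod}\ h)\}-{1\over h}\ =\ {1\over h}\sum_{r=1}^{h-1}\omega^{-rm}\,\E\,\omega^{rS_n},$$
so everything reduces to showing $\E\,\omega^{rS_n}\to 0$ for each fixed $r\in\{1,\dots,h-1\}$; doing this for every $h\ge2$ gives the a.u.d. property. (By independence $\E\,\omega^{rS_n}=\prod_{k\le n}\E\,\omega^{rX_k}$, hence $|\E\,\omega^{rS_n}|$ is nonincreasing; it is precisely the product appearing in \eqref{aud.dw.ns.}.)

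Next, write $g_n(x)=(2\pi)^{-1/2}B_n^{-1}e^{-(x-M_n)^2/(2B_n^2)}$ and $\varepsilon_n(N)=\P\{S_n=N\}-g_n(N)$, so that $\sup_N|\varepsilon_n(N)|=\delta_n/B_n$, and split $\E\,\omega^{rS_n}=\sum_{N\in\Z}g_n(N)\omega^{rN}+\sum_{N\in\Z}\varepsilon_n(N)\omega^{rN}$. For the first sum I would invoke the Poisson summation formula applied to $x\mapsto g_n(x)e^{2i\pi(r/h)x}$, whose Fourier transform at $\xi$ equals $e^{-2i\pi M_n(\xi-r/h)}\,e^{-2\pi^2B_n^2(\xi-r/h)^2}$; this yields
$$\Big|\sum_{N\in\Z}g_n(N)\,\omega^{rN}\Big|\ \le\ \sum_{\ell\in\Z}e^{-2\pi^2B_n^2(\ell-r/h)^2}\ \le\ C_h\,e^{-2\pi^2B_n^2/h^2}\ \longrightarrow\ 0,$$
because $\mathrm{dist}(r/h,\Z)\ge 1/h$ when $1\le r\le h-1$. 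This is the character-twisted analogue of \eqref{def.llt.indep.poisson}.

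The remaining point — and the only genuine obstacle — is the error sum $\sum_{N\in\Z}\varepsilon_n(N)\omega^{rN}$. The local limit theorem supplies only the pointwise bound $|\varepsilon_n|\le\delta_n/B_n$, which is worthless summed over all of $\Z$ and gives merely an $o(1)$ term even when restricted to a window of width $\asymp B_n$ (which already captures almost all the Gaussian mass); on the other hand the honest tail $\P\{|S_n-M_n|>T_n\}$ can be controlled only through the variance, by Chebyshev's inequality, which forces the truncation level $T_n$ to lie above $B_n$. These requirements are compatible precisely because the local limit error has the form $\delta_n/B_n$ with $\delta_n\to0$: one may then take $T_n$ with $B_n\ll T_n\ll B_n/\delta_n$, say $T_n=B_n/\sqrt{\delta_n}$ (replacing $\delta_n$ by $\delta_n\vee n^{-1}$ should it vanish). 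With this choice,
\begin{align*}
\Big|\sum_{|N-M_n|\le T_n}\varepsilon_n(N)\omega^{rN}\Big|&\ \le\ (2T_n+1)\,{\delta_n\over B_n}\ =\ O(\sqrt{\delta_n}),\\
\Big|\sum_{|N-M_n|>T_n}\varepsilon_n(N)\omega^{rN}\Big|&\ \le\ \P\{|S_n-M_n|>T_n\}+\sum_{|N-M_n|>T_n}g_n(N)\ \le\ {B_n^2\over T_n^2}+C\,e^{-T_n^2/(2B_n^2)},
\end{align*}
by counting lattice points in the first line and by Chebyshev and a Gaussian tail estimate in the second; since $B_n^2/T_n^2=\delta_n\to0$ and $T_n/B_n\to\infty$, all these quantities — together with the Poisson term of the previous paragraph — tend to $0$. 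Hence $\E\,\omega^{rS_n}\to0$, which proves the theorem. The verifications left out (the Poisson summation identity, the Gaussian and Chebyshev tail bounds, and the reduction to $v_0=0,D=1$) are all routine; the crux is the scale-matching in the last step.
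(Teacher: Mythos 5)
Your proof is correct, and it is the Fourier--dual of the paper's argument rather than a genuinely new one: the paper deduces Theorem~\ref{l1} from the quantitative Theorem~\ref{l1a}, whose proof works directly in ``physical space'' --- it truncates $\sum_{k\equiv m\,(h)}\P\{S_n=k\}$ to a window $|k-M_n|\le B_n/\sqrt{\e}$, uses the pointwise LLT bound inside, Chebyshev outside, and Poisson summation on the Gaussian restricted to the progression --- whereas you first pass to the characters of $\Z/h\Z$ and then perform exactly the same three estimates on each exponential sum $\E\,\omega^{rS_n}$. In particular your scale-matching $B_n\ll T_n\ll B_n/\delta_n$ is precisely the paper's choice $T_n=B_n/\sqrt{\e}$ with $\e=\phi(B_n)^{-2/3}$, i.e.\ $\delta_n\asymp\phi(B_n)^{-1}$; this is the one nontrivial idea, and you have it. What the two presentations buy is slightly different: the paper's direct version produces the explicit bound \eqref{est.3} with named constants, which is what feeds the strengthened Rozanov condition in Section~2, while your character version connects immediately to the Dvoretsky--Wolfowitz product criterion \eqref{aud.dw.ns.} and makes transparent why the Gaussian contribution is exponentially small, namely $\mathrm{dist}(r/h,\Z)\ge 1/h$ for $1\le r\le h-1$ (the untwisted case $r=0$, where Poisson gives $1+\mathcal O(1/B_n)$ as in \eqref{def.llt.indep.poisson}, is exactly the main term $1/h$ after averaging). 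Two small points to tidy up: the use of Chebyshev requires square integrability, which is implicit in the definition \eqref{MnBn} of $B_n$ but not in the bare statement of the theorem; and your reduction to $v_0=0$, $D=1$ should note that a.u.d.\ for all $h\ge 2$ can only be expected when the maximal span is $1$, which is the situation the paper's condition \eqref{phi.cond} already encodes.
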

 \begin{remark}    In Petrov \cite{P}, Lemma 1,\,p.\,194, also in   Rozanov's \cite{Ro} Lemma 1,\,p.\,261,  Theorem \ref{l1} is   stated  under the  assumption that a  local limit theorem  in the strong form holds, which is not necessary.
 \end{remark}
We will in fact prove the following stronger result providing an explicit link between the local limit theorem and  the a.u.d. property, through a     quantitative   estimate of  the difference ${\mathbb P} \{ S_n\equiv\,  m\! \hbox{\rm{ (mod $h$)}} \}- {1}/{h}$.

\begin{theorem} \label{l1a}    Let $X=\{X_i , i\ge 1\}$ be  a sequence of  independent  variables taking values in $\Z$,  
and let $S_n=\sum_{k=1}^n X_k$, for each $n$. Assume that 
for some function $1\le \phi(t)\uparrow \infty $ as $t\to \infty$, and some constant $C$, we have for all $n$ 
 \begin{equation}\label{phi.cond}
\sup_{m\in \Z}\Big|B_n\P\big\{ S_n=m\big\}- {1\over   \sqrt{ 2\pi } }\ e^{-
{(m-M_n)^2\over  2 B_n^2} }\Big|\,\le \,  {C\over \,\phi(B_n)}.
 \end{equation}
\vskip 3 pt \noindent Then there exists a numerical constant $C_1$, such that for all $0<\e \le 1$, all  $n $ such that $B_n\ge 6$, and all $h\ge 2$, 
\begin{align*}
\sup_{\m=0,1,\ldots, h-1} \,&\Big|{\mathbb P}\big\{ S_n\equiv\, \m\ \hbox{\rm{ (mod $h$)}}\big\}- \frac{1}{h}\Big|
\cr   &\le   {1\over \sqrt{2\pi}\,  B_n   }+\frac{2C}{h\,\sqrt{\e}\,\phi(B_n)}
+  {\mathbb P}\Big\{   \frac{|S_n -M_n |}{B_n}> \frac{1}{\sqrt \e}\Big\}+C_1 \,e^{-1/(16\e)}.
\end{align*}
\end{theorem}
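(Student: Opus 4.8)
The plan is to expand the indicator of the congruence class via roots of unity, isolate the "main term" $1/h$, and then control the remaining $h-1$ exponential sums using the local limit hypothesis \eqref{phi.cond}. Concretely, write
\[
{\mathbb P}\big\{S_n\equiv \m\ \hbox{(mod $h$)}\big\}-\frac1h
=\frac1h\sum_{r=1}^{h-1} e^{-\frac{2i\pi}{h}r\m}\,{\mathbb E}\,e^{\frac{2i\pi}{h}rS_n},
\]
so that the left-hand side of the claimed inequality is at most $\frac1h\sum_{r=1}^{h-1}\big|{\mathbb E}\,e^{\frac{2i\pi}{h}rS_n}\big|$. The task is therefore reduced to bounding each characteristic-function value $\big|{\mathbb E}\,e^{\frac{2i\pi}{h}rS_n}\big|$ for $1\le r\le h-1$. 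Alternatively, and this is probably cleaner, I would work directly from the point probabilities: grouping the integers $m$ into residue classes,
\[
{\mathbb P}\big\{S_n\equiv \m\ \hbox{(mod $h$)}\big\}
=\sum_{k\in\Z}{\mathbb P}\{S_n=\m+kh\},
\]
and then substitute \eqref{phi.cond} to replace each ${\mathbb P}\{S_n=m\}$ by $\frac1{B_n\sqrt{2\pi}}e^{-(m-M_n)^2/2B_n^2}$ with an error $C/(B_n\phi(B_n))$ per term — but since there are infinitely many terms one cannot sum the errors naively, which is exactly where the truncation enters.

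The key steps, in order, are: (1) Fix a threshold $T=B_n/\sqrt{\e}$ and split the sum over $k$ according to whether $|\m+kh-M_n|\le T$ or not. (2) For the "far" range, bound $\sum_{|\m+kh-M_n|>T}{\mathbb P}\{S_n=\m+kh\}\le {\mathbb P}\{|S_n-M_n|>T\}={\mathbb P}\{|S_n-M_n|/B_n>1/\sqrt\e\}$, which is the third term on the right. (3) For the "near" range there are at most $\frac{2T}{h}+1$ values of $k$, so applying \eqref{phi.cond} on each gives a total replacement error of at most $\big(\frac{2T}{h}+1\big)\cdot\frac{C}{B_n\phi(B_n)}=\big(\frac{2}{h\sqrt\e}+\frac1{B_n}\big)\frac{C}{\phi(B_n)}$; absorbing the $\frac{C}{B_n\phi(B_n)}$ piece into the displayed $\frac1{\sqrt{2\pi}B_n}$ term (using $\phi\ge1$ and choosing constants, or just keeping $h\ge2$) yields the $\frac{2C}{h\sqrt\e\,\phi(B_n)}$ contribution. (4) It remains to estimate the Gaussian sum $\frac1{B_n\sqrt{2\pi}}\sum_{|\m+kh-M_n|\le T}e^{-(\m+kh-M_n)^2/2B_n^2}$: compare it to $\frac1{B_n\sqrt{2\pi}}\sum_{k\in\Z}e^{-(\m+kh-M_n)^2/2B_n^2}$, which by Poisson summation (as in \eqref{def.llt.indep.poisson}) equals $\frac1h\sum_{\ell\in\Z}e^{2i\pi\ell(\cdot)-2\pi^2\ell^2B_n^2/h^2}=\frac1h(1+{\mathcal O}(e^{-2\pi^2 B_n^2/h^2}))$; the difference between the truncated and full Gaussian sums is a tail $\lesssim \frac1{B_n}\sum_{|j|>T/h}e^{-(jh)^2/2B_n^2}$, which one bounds by a Gaussian tail integral of order $e^{-T^2/2B_n^2}=e^{-1/(2\e)}$, and this is where the term $C_1 e^{-1/(16\e)}$ comes from (the looser constant $1/16$ leaving room for the $h$-dependence and for comparing the Poisson tail when $h$ is not small relative to $B_n$).

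The main obstacle is step (4): one must produce a bound on $\big|\frac1{B_n\sqrt{2\pi}}\sum_{k\in\Z}e^{-(\m+kh-M_n)^2/2B_n^2}-\frac1h\big|$ that is uniform in $h\ge2$ and in the phase $\{(\m-M_n)/h\}$, and simultaneously control the truncation tail, with all constants independent of $h$, $n$, $\m$. The Poisson-summation identity handles the periodized Gaussian exactly, but one has to be careful that when $h$ is comparable to or larger than $B_n$ the "error" $e^{-2\pi^2 B_n^2/h^2}$ is not small — however in that regime the trivial bound $\big|{\mathbb P}\{S_n\equiv\m\}-1/h\big|\le 1$ together with $1/h\le 1/2$ already beats (or is absorbed by) the claimed right-hand side, so one can dispose of the large-$h$ case separately at the outset, or simply note that $\frac1{\sqrt{2\pi}B_n}$ plus the tail term dominates. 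I would also double-check that $\e\le1$ and $B_n\ge6$ are exactly what is needed to make $T=B_n/\sqrt\e\ge B_n\ge6$ large enough for the Gaussian tail estimate to take its clean form and for the additive constants in step (3) to be absorbed as claimed.
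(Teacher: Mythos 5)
Your plan is essentially the paper's proof: truncate at $|k-M_n|\le B_n/\sqrt{\e}$, bound the far range by the deviation probability, apply \eqref{phi.cond} termwise to the roughly $2B_n/(h\sqrt{\e})$ near-range lattice points, compare the full periodized Gaussian to $1/h$ by Poisson summation, and charge the truncated Gaussian tail to $C_1e^{-1/(16\e)}$. All four error terms are correctly identified and attributed.

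The one step that would fail as written is your first fallback for the large-$h$ regime in step (4). When $h\gtrsim B_n$ the trivial bound $|\P\{S_n\equiv\m\ (\mathrm{mod}\ h)\}-1/h|\le 1$ does \emph{not} beat the claimed right-hand side: the latter can be made arbitrarily small (take $B_n$ and $\phi(B_n)$ large and $\e$ moderate) while $h$ is simultaneously huge, so no case split on $h$ closes the argument via the trivial bound. The paper needs no case split at all: after Poisson summation the discrepancy is $\frac1h\sum_{|\ell|\ge1}e^{-2\pi^2B_n^2\ell^2/h^2}$, and the elementary two-sided estimate \eqref{aux.est1}, $\sum_{H\ge1}e^{-aH^2}\le\frac{\sqrt{\pi}}{2}\min(a^{-1/2},a^{-1})$ — whose $a^{-1/2}$ branch is exactly what saves the small-$a$, i.e.\ large-$h$, case — yields the bound $h/(\sqrt{2\pi}B_n)$ before division by $h$, hence $1/(\sqrt{2\pi}B_n)$ uniformly in $h\ge2$. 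Your second fallback (``the $1/(\sqrt{2\pi}B_n)$ term dominates'') is the correct statement, but it is precisely what must be proved, and this $\min$ estimate is the missing ingredient. A minor shared point of slack: the ``$+1$'' in the lattice-point count of the near range contributes an extra $C/(B_n\phi(B_n))$ that is not literally absorbed by the displayed constants (the paper's own count $\le 2B_n/(h\sqrt{\e})$ omits it too); this only perturbs the numerical constants and is harmless.
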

  \vskip 8 pt 
\begin{remark} \label{rem.thl1a}  It follows from the proof that $C_1=2e\sqrt{\pi}$ is suitable.
\end{remark}
   Choosing $\e= \phi(B_n)^{-2/3}$ and using Tchebycheff's inequality, we get the following
 \begin{corollary}\label{cor}For all   $n $ such that $B_n\ge 6$, and all $h\ge 2$, 
we have 
\begin{align}\label{eps.phi}
\sup_{\m=0,1,\ldots, h-1} \,  \Big|{\mathbb P}\big\{ S_n\equiv\,  \m\ \hbox{\rm{ (mod $h$)}}\big\}- \frac{1}{h}\Big|
 \le H_n ,
\end{align}
with
\begin{align}\label{eps.phi.Hn}
   H_n=  {1\over \sqrt{2\pi}\,  B_n   }+\frac{1+ 2 {C}/{h}
}{ \phi(B_n)^{2/3} } 
+  C_1 \,e^{-(1/ 16 )\phi(B_n)^{2/3}}.
\end{align}
\end{corollary}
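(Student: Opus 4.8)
The plan is to deduce Corollary~\ref{cor} from Theorem~\ref{l1a} by making the stated choice of $\e$ and controlling the probabilistic tail term by Tchebycheff's inequality. First I would apply Theorem~\ref{l1a} with the specific value $\e=\phi(B_n)^{-2/3}$; this is admissible since $\phi(t)\ge 1$ forces $0<\e\le 1$, and the hypothesis $B_n\ge 6$ is inherited unchanged. This substitution turns the bound into
\begin{align*}
\sup_{\m=0,\ldots,h-1}\Big|{\mathbb P}\big\{ S_n\equiv \m\ \hbox{(mod $h$)}\big\}-\frac1h\Big|
\le \frac{1}{\sqrt{2\pi}\,B_n}+\frac{2C}{h\,\phi(B_n)^{-1/3}\,\phi(B_n)}
+{\mathbb P}\Big\{\frac{|S_n-M_n|}{B_n}>\phi(B_n)^{1/3}\Big\}+C_1\,e^{-(1/16)\phi(B_n)^{2/3}}.
\end{align*}
The middle term simplifies: $\frac{2C}{h}\cdot\phi(B_n)^{1/3}/\phi(B_n)=\frac{2C}{h\,\phi(B_n)^{2/3}}$, which is exactly the $2C/h$ part of the numerator in $H_n$.

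Next I would bound the tail probability. By Tchebycheff's inequality applied to $S_n-M_n$, which has variance $B_n^2$, one has for any $t>0$ that ${\mathbb P}\{|S_n-M_n|/B_n>t\}\le 1/t^2$. Taking $t=\phi(B_n)^{1/3}$ gives
$${\mathbb P}\Big\{\frac{|S_n-M_n|}{B_n}>\phi(B_n)^{1/3}\Big\}\le \frac{1}{\phi(B_n)^{2/3}},$$
which is precisely the remaining $1$ in the numerator $1+2C/h$ of \eqref{eps.phi.Hn}. Assembling the four contributions — $\frac{1}{\sqrt{2\pi}B_n}$, then $\frac{1}{\phi(B_n)^{2/3}}+\frac{2C/h}{\phi(B_n)^{2/3}}=\frac{1+2C/h}{\phi(B_n)^{2/3}}$, and finally $C_1 e^{-(1/16)\phi(B_n)^{2/3}}$ — yields exactly $H_n$, completing the proof.

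There is essentially no obstacle here: the corollary is a routine specialization, and the only mild points to note are that the choice $\e=\phi(B_n)^{-2/3}$ lies in the allowed range $(0,1]$ because $\phi\ge 1$, and that Tchebycheff is being used in the cheapest possible form (no higher moments are assumed, so one cannot do better than the second-moment bound). If one wished to track the constant, Remark~\ref{rem.thl1a} already records that $C_1=2e\sqrt{\pi}$ works in Theorem~\ref{l1a}, and this value is simply carried over verbatim into $H_n$.
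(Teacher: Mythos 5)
Your proposal is correct and is exactly the paper's argument: the corollary is obtained by setting $\e=\phi(B_n)^{-2/3}$ in Theorem~\ref{l1a} and bounding the tail term by Tchebycheff's inequality, $\P\{|S_n-M_n|/B_n>\phi(B_n)^{1/3}\}\le \phi(B_n)^{-2/3}$. All substitutions and the resulting form of $H_n$ check out.
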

 Theorem \ref{l1a} contains Theorem \ref{l1},  since by  definition such a function $\phi$ exists   if the local limit theorem is applicable to   $X$. Further condition \eqref{phi.cond} implies that the local limit theorem is applicable to   $X$.

\begin{remark} Examples of LLT's with speed of convergence are given in   Appendix.
\end{remark}
\begin{proof}
By assumption, 
\begin{equation*}
\Big|B_n\P\big\{ S_n=m\big\}- {1\over   \sqrt{ 2\pi } }\ e^{-
{(m-M_n)^2\over  2 B_n^2} }\Big|\,\le \,  {C\over \phi(B_n) },
 \end{equation*}
 for all $m$ and $n$.  Let  $\e>0$.
We have 
\begin{eqnarray*}
\Big|{\mathbb P}\big\{ S_n\equiv\, m\ \hbox{\rm (mod $h$)}\big\}-  \sum_{|k-M_n|\le B_n/\sqrt \e
\atop k\equiv m\, (h)} {\mathbb P}\big\{S_n=k\}\Big|&\le &
 {\mathbb P}\Big\{   \frac{|S_n -M_n |}{B_n}> \frac{1}{\sqrt \e}\Big\}  
,
\end{eqnarray*}
\begin{align*}
  \Big| \sum_{|k-M_n|\le B_n/\sqrt \e
\atop k\equiv m\, (h)} {\mathbb P}\big\{S_n=k\}-  & {1\over  \sqrt{ 2\pi }B_n }      \sum_{|k-M_n|\le B_n/\sqrt \e
\atop k \equiv m\, (h)}   e^{-
{(k-M_n)^2\over  2 B^2_n} }   \Big| \cr &\le  \,  {C\over B_n\phi(B_n) }\,\sum_{|k-M_n|\le B_n/\sqrt \e
\atop k\equiv m\, (h)}1
\ \le   \,  \frac{2C}{h\,\sqrt{\e}\,\phi(B_n)} 
.
\end{align*}


Letting $z_n= \lfloor M_n\rfloor$, we have
\begin{eqnarray*} \sum_{k\in\Z\atop|k-M_n|> B_n/\sqrt \e } e^{-
{(k-M_n)^2\over  2 B^2_n} }
&\le & \sum_{Z\in \Z \atop |Z-z_n |> B_n/ \sqrt \e } e^{-
{(Z-z_n)^2\over  2 B^2_n} }.
\end{eqnarray*}
Now using the elementary inequality $(a+b)^2\le 2(a^2+b^2)$ for  reals $a$, $b$, we have $|Z-z_n |\le\sqrt 2(  |Z  |+|z_n|) $ and $|Z-z_n |^2\ge   |Z |^2/2-z_n^2$. We can thus continue as follows
\begin{eqnarray*}\,\le\, \sum_{Z\in \Z \atop \sqrt 2(  |Z  |+|z_n|) > B_n/ \sqrt \e } e^{-
{(Z-z_n)^2\over  2 B^2_n} }
&\le& e^{ 
{1\over  2 B^2_n} }\,\sum_{ Z\in \Z \atop  |Z  |  > (B_n/  \sqrt{2 \e}) -1} e^{-
{ Z  ^2\over  4 B^2_n} }
.\qq \end{eqnarray*}

 Assume that $B_n\ge \max( 1/\sqrt 2,4\sqrt{2 \e})$, then ${B_n\over  \sqrt{2 \e}}-2\ge {B_n\over  2\sqrt{2 \e}}$.
In particular  $|Z|\ge 1$ in the previous series, and so we have the estimates
 \begin{eqnarray*}\ \le\ 2\,e^{ 
{1\over  2 B^2_n} }\,\sum_{    Z    > (B_n/  2\sqrt{2 \e}) +1} e^{-
{ Z  ^2\over  4 B^2_n} }&\le & 2\,e
\sum_{    Z    > (B_n/  2\sqrt{2 \e}) +1} \int_{Z-1}^Z e^{-{t^2\over 4 B^2_n}} {\rm d} t
\cr &\le & 2\,e
\int_{B_n/ 2\sqrt{2 \e} }^\infty e^{-{t^2\over 4 B^2_n}} {\rm d} t
\cr(
t= \sqrt 2B_n u)\quad&=& 2 \sqrt{2 }e
B_n\int_{1/4\sqrt{  \e}}^\infty e^{-{u^2\over 2 }} {\rm d} u
\cr &\le &  2\sqrt{2 }e
B_n\sqrt{{\pi\over 2}}\, e^{-1/(16\e)}
\cr &= &
2e\sqrt{\pi}
B_n \, e^{-1/(16\e)},\end{eqnarray*}
since  $  e^{x^2/2}\int_x^\infty e^{-t^2/2}{\rm d}t  \le \sqrt{{\pi\over 2}}$,   for any $x\ge 0$.
  
Therefore
\begin{eqnarray}\label{est.1}
& &\Big|{\mathbb P}\big\{ S_n\equiv\, m\ \hbox{\rm (mod $h$)}\big\}
 - {1\over  \sqrt{ 2\pi }B_n }    \sum_{ k \equiv m\, (h)}  e^{-
{(k-M_n)^2\over  2 B^2_n} }\Big|\cr &\le & {\mathbb P}\Big\{   \frac{|S_n -M_n |}{B_n}> \frac{1}{\sqrt \e}\Big\}  + \frac{2C}{h\,\sqrt{\e}\,\phi(B_n)}   +C_1 \, e^{-1/(16\e)} ,
\end{eqnarray}
with $C_1=2e\sqrt{\pi}$.
\vskip 5 pt 
Recall   Poisson summation formula: for   $x\in \R,\  0\le
\d\le 1
$, 
\begin{equation}\label{poisson}\sum_{\ell\in \Z} e^{-(\ell+\d)^2\pi x^{-1}}=x^{1/2} \sum_{\ell\in \Z}  e^{2i\pi \ell\d -\ell^2\pi x}.
 \end{equation}  
  Write $k=m+l h$, $M'_n=M_n-m$, 
\begin{equation}{(k-M_n)^2\over  2 B^2_n}={( l h-M'_n)^2\over  2 B^2_n}={( l  -\lceil M'_n/h\rceil+\{ M'_n/h\})^2\over  2 B^2_n/h^2}={( \ell  +\{ M'_n/h\})^2\over  2 B^2_n/h^2},
\end{equation}
letting $ \ell=l  -\lceil M'_n/h\rceil$. 
\vskip 3 pt
By applying it with $x=2 B^2_n\pi /h^2$, $\d=\{ M'_n/h\}$, we get 
\begin{equation}     \sum_{ k \equiv m\, (h)}   e^{-
{(k-M_n)^2\over  2 B^2_n} }\,=\, \sum_{\ell \in \Z}e^{-{( \ell  -\{ M'_n/h\})^2\over  2 B^2_n/h^2}}\,=\,  {\sqrt{2 \pi}B_n\over h}\,\sum_{\ell \in \Z}e^{ -2i\pi \ell \{ M'_n/h\} -2\pi^2B_n^2\ell^2/h^2}.
\end{equation}
 Whence
\begin{equation}   \Big|{  h\over \sqrt{2 \pi}B_n}  \sum_{ k \equiv m\, (h)}   e^{-
{(k-M_n)^2\over  2 B^2_n} } -1\Big|\le  \sum_{|\ell |\ge 1}e^{ -2\pi^2B_n^2\ell^2/h^2}.
\end{equation}
But for  any positive real  $a$,
 \begin{equation}\label{aux.est1}\sum_{H=1}^\infty e^{-aH^2}\le {\sqrt \pi\over 2 }\min({1\over \sqrt a}, {1\over a}).
 \end{equation}
Therefore with $a= 2\pi^2B_n^2/h^2$,
\begin{equation*}   \Big|{  h\over \sqrt{2 \pi}B_n}  \sum_{ k \equiv m\, (h)} 
  e^{-{(k-M_n)^2\over  2 B^2_n} } -1\Big|\le  {\sqrt \pi }\min({h\over \sqrt{2}\pi B_n   }, {h^2\over 2\pi^2B_n^2 })\le {h\over \sqrt{2\pi}\,  B_n   }.
\end{equation*}
We have thus obtained the explicit bound 
\begin{equation}\label{est2}   \Big|{  1\over \sqrt{2 \pi}B_n}  \sum_{ k \equiv m\, (h)}   e^{-
{(k-M_n)^2\over  2 B^2_n} } -\frac1h\Big| \le {1\over \sqrt{2\pi}\,  B_n   }.
\end{equation}
By carrying it back to \eqref{est.1}, we get for any $\e>0$, all  $n $ such that $B_n\ge \max( 1/\sqrt 2,4\sqrt{2 \e})$, and all $h\ge 2$, 
  
\begin{align}\label{est.3}\sup_{\m=0,1,\ldots, h-1}
\Big|{\mathbb P}\big\{ S_n\equiv\, \m\ \hbox{\rm{ (mod $h$)}}\big\}-\frac{1}{h}\Big|
 &\le   {1\over \sqrt{2\pi}\,  B_n   }+ \frac{2C}{h\,\sqrt{\e}\,\phi(B_n)}
\cr   &\quad+  {\mathbb P}\Big\{   |S_n -M_n |> {B_n\over \sqrt \e}\Big\}+ C_1\, e^{-1/(16\e)}.
\end{align}
This is fulfilled if we choose $0<\e \le 1$, and $n$ such that $B_n\ge 6$, whence the claimed estimate.
\end{proof}



 \section{Local limit theorem in the strong form}\label{s2}
There are easy examples of sequences $X$ for which the fulfilment of the local limit theorem  depends on the behavior of the first members of $X$. 
Hence  it is reasonable to introduce the following definition due to Prohorov \cite{Pr}. A local limit theorem   in  the {\it strong form}
(or {\it in a strengthened form})
 is said to be
applicable to $  X$, if a local limit theorem in the usual form   is applicable to any subsequence extracted from $  X$, which differs from
$  X$ only in a finite number of members. 
\vskip 3 pt This definition can be made a bit more convenient, see Gamkrelidze \cite{Gam3}.
Let
\begin{equation}S_{k,n}=X_{k+1}+ \ldots +X_{k+n},\qq A_{k,n}=\E S_{k,n}, \qq B^2_{k,n}={\rm Var} (S_{k,n}).
\end{equation}
The local limit theorem   in  the  strong form  holds if and only if
\begin{equation}\label{lltsf.ref}
\P\big\{ S_{k,n}=m\big\}= {D\over B_{k,n} \sqrt{ 2\pi } }\ e^{-
{(m-A_{k,n})^2\over  2 B_{k,n}^2} }+o\Big({1\over B_{k,n}  }\Big),
\end{equation}
uniformly in $m$ and every finite $k$, $k=0,1,2, \ldots$, as $n\to \infty$ and $B_{k,n}\to \infty$.
\vskip 8 pt
Rozanov's   necessary condition     states as follows.

\begin{theorem}[\cite{Ro},\,Th.\,I]\label{rozanov.I} Let $  X= \{ X_j , j\ge 1\}$ be a sequence of independent, square integrable random variables taking  values in $\Z$. Let $b_k^2= {\rm Var}(X_k)$, $B_n^2 =b_1^2+\ldots+ b_n^2$. Assume that
\begin{equation}\label{Ro.A}B_n \to \infty\qq\qq {\rm as} \ n\to \infty.
\end{equation}
The following condition is necessary for the applicability of a local limit theorem  in the strong form to the sequence $X$,
\begin{equation}\label{rozn}\prod_{k=1}^\infty
\big[ \max_{0\le m< h} {\mathbb P}\big\{X_k\equiv m\, {\rm (mod {\it \, h })} \big\}\big]=0\qq {\it for\ any\ }  h\ge 2 .
\end{equation}
\end{theorem}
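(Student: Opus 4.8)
The plan is to argue by contrapositive: assuming the product in \eqref{rozn} is strictly positive for some $h\ge 2$, I will construct a finite modification of $X$ to which no local limit theorem in the usual form applies, contradicting applicability of the strong-form LLT. First I would fix such an $h$ and, writing $p_k=\max_{0\le m<h}\P\{X_k\equiv m\,(\mathrm{mod}\,h)\}$, observe that $\prod_{k\ge 1}p_k>0$ forces $\sum_k(1-p_k)<\infty$, so there is a finite index $k_0$ with $\prod_{k>k_0}p_k\ge \tfrac12$ (say). The natural object to track is the characteristic function on the lattice $\tfrac{2\pi}{h}\Z$: put $\chi_k(r)=\E e^{(2i\pi/h)rX_k}$ for $r=1,\dots,h-1$. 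A short estimate (the same one underlying Dvoretsky--Wolfowitz \eqref{aud.dw.ns}) shows $|\chi_k(r)|\ge 2p_k-1$ when $p_k>1/2$, hence for the tail product $\prod_{k>k_0}|\chi_k(r)|\ge\prod_{k>k_0}(2p_k-1)>0$, provided $k_0$ is chosen large enough that $p_k>1/2$ for all $k>k_0$ (possible since $p_k\to1$).

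Next I would pass to the finite modification $\widetilde X$ of $X$ obtained by deleting $X_1,\dots,X_{k_0}$; by definition of the strong form, the usual-form LLT \eqref{def.llt.indep} must hold for $\widetilde S_n=X_{k_0+1}+\dots+X_{k_0+n}$. Now apply Theorem \ref{l1a}, or more directly the packaging in Corollary \ref{cor}: the usual-form LLT supplies a rate function $\phi\uparrow\infty$ with $\phi(B_n)\to\infty$, so \eqref{eps.phi}--\eqref{eps.phi.Hn} give $H_n\to0$ and hence $\P\{\widetilde S_n\equiv m\,(\mathrm{mod}\,h)\}\to 1/h$ for every residue $m$. By the Fourier inversion on $\Z/h\Z$ this is equivalent to $\E e^{(2i\pi/h)r\widetilde S_n}=\prod_{j=k_0+1}^{k_0+n}\chi_j(r)\to 0$ for each $r=1,\dots,h-1$. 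But the lower bound from the previous paragraph says $\big|\prod_{j=k_0+1}^{k_0+n}\chi_j(r)\big|\ge\prod_{j>k_0}(2p_j-1)>0$ uniformly in $n$, a contradiction. Therefore \eqref{rozn} must hold.

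The main obstacle — and the step I would be most careful with — is the choice of the finite truncation index $k_0$ and making sure all three constraints hold simultaneously: $p_k>1/2$ for $k>k_0$ (so the elementary bound $|\chi_k(r)|\ge 2p_k-1$ is non-vacuous), the convergence $\sum_{k>k_0}(1-p_k)<\infty$ giving a genuinely positive infinite product $\prod_{k>k_0}(2p_k-1)$, and $B_{k_0,n}\to\infty$ so that the usual-form LLT is even meaningful for $\widetilde S_n$ (this last point uses hypothesis \eqref{Ro.A} together with the fact that deleting finitely many summands changes $B_n^2$ by a bounded amount). One subtlety worth flagging: the inequality $|\E e^{(2i\pi/h)rX_k}|\ge 2p_k-1$ should be checked — it follows from writing $X_k\pmod h$ as a probability vector $(q_0,\dots,q_{h-1})$ with $\max_m q_m=p_k$, so $|\sum_m q_m e^{(2i\pi/h)rm}|\ge p_k-\sum_{m:q_m\ne\max}q_m=p_k-(1-p_k)=2p_k-1$. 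Everything else is bookkeeping around Corollary \ref{cor}.
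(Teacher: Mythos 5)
Your proof is correct, but it takes a genuinely different route from the one the paper indicates. The paper's sketch is probabilistic: arguing by contradiction, if $\sum_k \P\{X_k\not\equiv 0\ (\mathrm{mod}\ h)\}<\infty$ then Borel--Cantelli forces $X_k\equiv 0\ (\mathrm{mod}\ h)$ for all $k\ge k_0$ on a set of measure $>3/4$, so after zeroing out the first $k_0$ members the modified sums satisfy $\P\{S'_n\equiv 0\ (\mathrm{mod}\ h)\}>3/4$ for large $n$, contradicting the convergence to $1/h$ guaranteed by Theorem \ref{l1}; to get the full statement with $\max_{0\le m<h}$ one must additionally recenter by the maximizing residues ($Y_k=X_k-m_k$), a step the paper carries out only in the proof of its next theorem. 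You instead run the Dvoretsky--Wolfowitz characteristic-function mechanism: the elementary bound $|\E e^{(2i\pi/h)rX_k}|\ge 2p_k-1$ makes the tail products $\prod_{k>k_0}|\chi_k(r)|$ bounded away from zero when $\prod_k p_k>0$, which contradicts the Fourier reformulation of the a.u.d.\ property for the truncated sums. Your version has two advantages: the modulus of the characteristic function is invariant under the shift $X_k\mapsto X_k-m_k$, so the general maximum is handled with no recentering, and the lower bound $\prod_{k>k_0}(2p_k-1)$ is quantitative rather than a measure-$3/4$ soft argument. The paper's version is more elementary (no Fourier analysis on $\Z/h\Z$) and is closer in spirit to Rozanov's original. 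Both arguments funnel through the same pivot, namely that the usual-form LLT for the finitely modified sequence implies a.u.d.($h$) (Theorem \ref{l1}, or quantitatively Corollary \ref{cor}); your invocation of Corollary \ref{cor} is slight overkill since the qualitative Theorem \ref{l1} suffices, but it is not a gap. The only points deserving the care you already flag are that $B_{k_0,n}\to\infty$ after truncation (immediate from \eqref{Ro.A} since $B_n$ is nondecreasing and only finitely many variances are removed) and that the truncated sequence is an admissible finite modification in the sense of Prohorov's definition, which is exactly what Gamkrelidze's reformulation \eqref{lltsf.ref} provides.
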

\vskip 2 pt 
Condition \eqref{rozn} is also sufficient in some important examples, in particular if    $X_j$ have stable limit distribution, see Mitalauskas  \cite{Mit}. 
We briefly indicate how  Theorem \ref{rozanov.I} is proved. 
If  the   local limit theorem    in the strong form  is applicable to the sequence $X$, then
\begin{equation} \label{roz.cond}
\sum_{k=1}^\infty \  {\mathbb P}\big\{ X_k\not\equiv 0\, ({\rm mod} \ h) \big\}= \infty, \qq {\it for\ any} \ h\ge 2.
\end{equation}
Indeed, otherwise given $h\ge 2$,  by the  Borel--Cantelli lemma,   on a set of measure greater than $3/4$, $X_k\equiv 0\, ({\rm mod} \ h)$ for all $k\ge k_0$, say. The new sequence $X'$ defined by $X'_k=0$ if $k< k_0$, $X'_k=X_k$ unless, with partial sums $S'_n$, verifies ${\mathbb P}\{ S'_n\equiv 0\, ({\rm mod} \ h) \}>3/4$ for all $n$ large enough, and this can be used to bring a contradiction with the fact
that ${\mathbb P}\{ S'_n\equiv 0\, ({\rm mod} \ h) \}$ should converge to $1/h$.  
\vskip 2 pt  

\vskip 3 pt 
 
The arithmetical quantity 
$$\max_{0\le m< h} {\mathbb P}\big\{X_k\equiv m\, {\rm (mod({\it h})} \big\}$$
 also appears in the study of 
  local limit theorems with arithmetical sufficient  conditions. The   approaches used   (Freiman,  Moskvin  and Yudin \cite{FMY}, Mitalauskas \cite{Mit1},  Raudelyunas \cite{Rau} and later   Fomin \cite{Fo},    for instance) require  the random variables to do not overly much concentrate in a particular residue class $m$ (mod $h$)  of $\Z$, and impose 
arithmetical conditions of type: For all $h\ge 2$
\begin{equation}\label{llt.arithm.cond.}
  \max_{0\le m<h}{\mathbb P}\{X_k\equiv m \ {\rm (mod\, {\it h})}\}\le 1-\a_k,
  \end{equation}
for all $k$, where $\a_k$ is some specific sequence of reals decreasing to $0$. In addition, one generally have that $\sum _k \a_k = \infty$. Although   the simple form of local limit theorem is here considered, for obvious reasons,     condition \eqref{rozn} brings nothing more in this context.   

\vskip 3 pt

\vskip 3 pt As a consequence of   the quantitative formulation of the a.u.d. property  obtained in Theorem \ref{l1a}, we have  the following result.
\begin{theorem} Under the assumptions of Theorem \ref{rozanov.I}, assume further that the local limit theorem   is applicable to a sequence $X$.  Then
\vskip 3 pt
  {\rm(i)}  
 \begin{eqnarray*} \limsup_{h\to \infty}\ \prod_{k=1}^\infty \max_{0\le m< h}\P\{X_k\equiv\, m\ \hbox{\rm (mod $h$)}\}\,=\ 0.
\end{eqnarray*} 
 
 {\rm(ii)} There exists a  function $1\le \phi(t)\uparrow \infty $ as $t\to \infty$,  such that 
\begin{eqnarray*}     \sum_{k=1}^n\frac{  \max_{0\le m< h}\P\{X_k\equiv\, m\ \hbox{\rm (mod $h$)}\}}{1- \max_{0\le m< h}\P\{X_k\equiv\, m\ \hbox{\rm (mod $h$)}\}}
 &\ge &  -\log\big(\frac{1}{h}+H_n\big),
\end{eqnarray*}
where 
$H_n={  1\over \sqrt{2\pi}\,  B_n   }+\frac{1+ 2 {C}/{h}}{ \phi(B_n)^{2/3} } 
+  C_1\,e^{-(1/ 16 )\phi(B_n)^{2/3}} $, and   $C,C_1$ are absolute constants.\end{theorem}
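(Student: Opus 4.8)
The plan is to combine Corollary \ref{cor} with the elementary observation that the event $\{S_n\equiv m\ \hbox{(mod $h$)}\}$ is contained in the product structure of $X$, so that a lower bound on each conditional probability $\P\{S_n\equiv m\ (\mathrm{mod}\ h)\}$ can be converted into a product bound over the individual $X_k$. Fix $h\ge 2$. By Corollary \ref{cor}, for every $n$ with $B_n\ge 6$ and every residue $m$,
\[
\P\{S_n\equiv m\ \hbox{(mod $h$)}\}\ \le\ \frac1h+H_n,
\]
with $H_n$ as displayed. For part (i), I would fix $n$ large enough that $B_n\ge 6$ (possible by \eqref{Ro.A}), and bound the infinite product $\prod_{k\ge1}\max_{0\le m<h}\P\{X_k\equiv m\ (\mathrm{mod}\ h)\}$ by its partial product up to $n$; each factor $\max_m\P\{X_k\equiv m\ (\mathrm{mod}\ h)\}$ is at least the "diagonal" term one obtains from the independence of the $X_k$, so that the partial product is dominated by $\max_{0\le m<h}\P\{S_n\equiv m\ (\mathrm{mod}\ h)\}\le \frac1h+H_n$ — this is the standard Kolmogorov-type argument (for each choice of residues $m_k$ summing to $m$ mod $h$, $\prod_k\P\{X_k\equiv m_k\}\le\P\{S_n\equiv m\}$, and taking $m_k$ to realize each maximum gives the claim after noting the maxima need not be compatible; one instead uses $\prod_{k\le n}\max_m\P\{X_k\equiv m\}\le\max_m\P\{S_n\equiv m\}$, which holds because the left side equals the probability that each $X_k$ lands in its own most-likely class, an event that forces $S_n$ into some single class). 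Letting $n\to\infty$ gives $\prod_{k\ge1}\max_m\P\{X_k\equiv m\}\le\tfrac1h+\limsup_n H_n=\tfrac1h+0=\tfrac1h$ (since $B_n\to\infty$ and $\phi(B_n)\to\infty$ force $H_n\to0$), and then letting $h\to\infty$ yields $\limsup_{h\to\infty}\prod_{k\ge1}\max_m\P\{X_k\equiv m\}=0$.

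For part (ii), I would write $p_k=\max_{0\le m<h}\P\{X_k\equiv m\ (\mathrm{mod}\ h)\}$ and start from the product inequality $\prod_{k=1}^n p_k\le \max_m\P\{S_n\equiv m\ (\mathrm{mod}\ h)\}\le\frac1h+H_n$ just established. Taking logarithms,
\[
\sum_{k=1}^n\log p_k\ \le\ \log\Big(\frac1h+H_n\Big),\qquad\text{i.e.}\qquad \sum_{k=1}^n\big(-\log p_k\big)\ \ge\ -\log\Big(\frac1h+H_n\Big).
\]
The target inequality has summands $\frac{p_k}{1-p_k}$ rather than $-\log p_k$, so the key elementary step is the bound $-\log p\ \le\ \frac{1-p}{p}$ for $0<p\le1$ — equivalently $-\log p=\log(1/p)\le (1/p)-1=\frac{1-p}{p}$, which is just the inequality $\log x\le x-1$ applied to $x=1/p$. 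Hence $\sum_{k=1}^n\frac{p_k}{1-p_k}\ge\sum_{k=1}^n(-\log p_k)\ge-\log(\frac1h+H_n)$, which is exactly the asserted bound; the function $\phi$ is the one furnished by the hypothesis that the LLT (hence condition \eqref{phi.cond}) applies to $X$, and $C,C_1$ are the absolute constants of Theorem \ref{l1a} and Remark \ref{rem.thl1a}.

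The main obstacle, and the step deserving the most care, is the product inequality $\prod_{k=1}^n p_k\le\max_{0\le m<h}\P\{S_n\equiv m\ (\mathrm{mod}\ h)\}$: one must argue cleanly that the event $\bigcap_{k=1}^n\{X_k\equiv m_k^\ast\ (\mathrm{mod}\ h)\}$, where $m_k^\ast$ is a residue achieving $p_k$, has probability $\prod_{k=1}^n p_k$ by independence, and that on this event $S_n$ lies in the single residue class $\sum_{k=1}^n m_k^\ast\ (\mathrm{mod}\ h)$, so that $\prod_{k=1}^n p_k\le\P\{S_n\equiv \sum_k m_k^\ast\ (\mathrm{mod}\ h)\}\le\max_m\P\{S_n\equiv m\ (\mathrm{mod}\ h)\}$. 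Everything else is bookkeeping: verifying $H_n\to0$ from $B_n\to\infty$ and $\phi\uparrow\infty$ for (i), and the one-line convexity inequality $-\log p\le\frac{1-p}{p}$ for (ii). I would also remark that in (ii) the bound is only informative once $H_n<1-\frac1h$ so that the right-hand side is positive, which holds for all $n$ large by $H_n\to0$ (and for the remaining finitely many $n$ the inequality is trivial since the left side is a sum of nonnegative terms and $-\log(\frac1h+H_n)$ may be taken to be $-\infty$ or simply the stated—necessarily nonpositive—quantity).
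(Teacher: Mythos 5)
Your proposal follows essentially the same route as the paper. The paper passes to the shifted sequence $Y_k=X_k-m_k$, with $m_k$ a residue realizing $\max_{0\le m<h}\P\{X_k\equiv m\ (\mathrm{mod}\ h)\}=\P\{Y_k\equiv 0\ (\mathrm{mod}\ h)\}$, checks that $Y$ inherits condition \eqref{phi.cond} with the same $\phi$, and then uses exactly your product inequality $\prod_{k\le n}\P\{Y_k\equiv 0\}\le\P\{\sum_{k\le n}Y_k\equiv 0\ (\mathrm{mod}\ h)\}\le \frac1h+H_n$, followed by $n\to\infty$ and $h\to\infty$ for (i), and the elementary bound $\log(1-x)\ge -x/(1-x)$ for (ii). Your variant — applying Corollary \ref{cor} directly to $S_n$ and using the supremum over residue classes, so that no shift is needed — is a minor simplification; the substance is identical. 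Part (i) is correct.

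In part (ii) your final chain contains a step that is false as written. From $-\log p\le\frac{1-p}{p}$ you are entitled to conclude $\sum_{k\le n}\frac{1-p_k}{p_k}\ge\sum_{k\le n}(-\log p_k)\ge-\log(\frac1h+H_n)$, but you then assert $\sum_{k\le n}\frac{p_k}{1-p_k}\ge\sum_{k\le n}(-\log p_k)$, i.e. $\frac{p}{1-p}\ge-\log p$, which fails for small $p$ (for $p=1/10$ the left side is $1/9$ while the right side is $\log 10$), and $p_k$ can be as small as $1/h$. So your derivation proves the bound with summands $\frac{1-p_k}{p_k}$, not $\frac{p_k}{1-p_k}$. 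You should be aware, though, that the paper's own proof does exactly the same thing: it establishes $\sum_{k\le n}\P\{Y_k\not\equiv 0\}/(1-\P\{Y_k\not\equiv 0\})\ge-\log(\frac1h+H_n)$, where $\P\{Y_k\not\equiv 0\}=1-p_k$, and then identifies $\P\{Y_k\not\equiv 0\}$ with $\max_m\P\{X_k\equiv m\}$ — the same numerator/denominator swap. The theorem's displayed inequality thus appears to carry a typo, and the quantity actually controlled (by the paper and by your correct steps) is $\sum_{k\le n}\frac{1-p_k}{p_k}$. Net assessment: your argument is the paper's argument; just state the conclusion of (ii) in the form your inequality actually delivers rather than forcing it into the (mistyped) target.
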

\begin{proof} We purpose a direct argument. 
Consider a  
 sequence $Y$ where $Y_k=X_k-m_k$, $m_k$ are   integers, for all $k\ge 1$. Let $h\ge 2$ be fixed. Choose $m_k$ so that
 $$\max_{0\le m< h} {\mathbb P}\big\{X_k\equiv m\, {\rm mod({\it h})} \big\}= {\mathbb P}\big\{X_k\equiv m_k\, {\rm mod({\it h})} \big\}
 =  {\mathbb P}\big\{Y_k\equiv 0\, {\rm mod({\it h})} \big\}
 , $$
and let $\m_n=\sum_{k=1}^nm_k$.  Note that $\sum_{k=1}^n  Y_k=S_n -\m_n$, ${\rm Var}(\sum_{k=1}^nY_k)={\rm Var}(S_n)=B_n^2$.
 \vskip 2 pt \vskip 2 pt As the local limit theorem is   applicable to the sequence $X$,
  condition \eqref{phi.cond} is  satisfied for some function $1\le \phi(t)\uparrow \infty $ as $t\to \infty$, namely we have for all   $n$,
  \begin{equation*} 
\sup_{\nu\in \Z}\Big|B_n\P\big\{ S_n=\nu\big\}- {1\over   \sqrt{ 2\pi } }\ e^{-
{(\nu-M_n)^2\over  2 B_n^2} }\Big|\,\le \,  {C\over \,\phi(B_n)}.
 \end{equation*}
 Given $n$,   letting $\nu=m+\m_n$ and observing that  $\P \{ \sum_{k=1}^n  Y_k =m\}=\P \{ S_n -\m_n =m\}$, we get for $m\in \Z$, $n\ge 1$,
  \begin{equation*}
\Big|B_n\P\Big\{ \sum_{k=1}^n  Y_k =m \Big\}- {1\over   \sqrt{ 2\pi } }\ e^{-
{(m+\m_n-M_n)^2\over  2 B_n^2} }\Big|\,\le \,  {C\over \,\phi(B_n)}.
 \end{equation*} 
Thus $Y$ satisfies condition \eqref{phi.cond} with the same function $\phi(n) $.
  \vskip 2 pt \vskip 2 pt Applying  Remark \ref{rem.thl1a} to the sequence $Y$, it follows that,
\begin{eqnarray}\label{kb} \prod_{k=1}^n \max_{0\le m< h}\P\{X_k\equiv\, m\ \hbox{\rm (mod $h$)}\}&=&\prod_{k=1}^n  \P\{Y_k\equiv\, 0\ \hbox{\rm (mod $h$)}\}
\cr &\le &
{\mathbb P}\big\{ \sum_{k=1}^n  Y_k\equiv\, 0\ \hbox{\rm (mod $h$)}\big\}\le \frac{1}{h}+H_n,
\end{eqnarray} 
where $H_n$ has the form given in the statement, and $H_n\to 0$ as $n\to \infty$.
 \vskip 2 pt Letting $n$ tend to infinity in \eqref{kb} implies,
 \begin{eqnarray}\label{kbh}  \prod_{k=1}^\infty \max_{0\le m< h}\P\{X_k\equiv\, m\ \hbox{\rm (mod $h$)}\}&\le& \frac{1}{h}.
\end{eqnarray}
This being true for each $h$, $h\ge 2$, letting now $h$ tend to infinity in \eqref{kbh} yields,
 \begin{eqnarray} \limsup_{h\to \infty}\ \prod_{k=1}^\infty \max_{0\le m< h}\P\{X_k\equiv\, m\ \hbox{\rm (mod $h$)}\}&=& 0.
\end{eqnarray}

 \vskip 2 pt \vskip 2 pt
We also have by using  the elementary inequality  $\log( 1-x)\ge -x/(1-x)$, $0\le x<1$,
\begin{eqnarray*} \prod_{k=1}^n \P\{Y_k\equiv\, m\ \hbox{\rm (mod $h$)}\}
&=&\prod_{k=1}^n\big(1- \P\{Y_k\not\equiv\, m\ \hbox{\rm (mod $h$)}\}\big) \cr&=&e^{\sum_{k=1}^n \log(1-\P\{Y_k\not\equiv\, m\ \hbox{\rm (mod $h$)}\})}\cr&\ge &e^{-\sum_{k=1}^n \P\{Y_k\not\equiv\, m\ \hbox{\rm (mod $h$)}\}/(1-\P\{Y_k\not\equiv\, m\ \hbox{\rm (mod $h$)}\})}
.
\end{eqnarray*}
  Thus by   Remark \ref{rem.thl1a},
\begin{eqnarray*}     \sum_{k=1}^n\frac{  \max_{0\le m< h}\P\{X_k\equiv\, m\ \hbox{\rm (mod $h$)}\}}{1- \max_{0\le m< h}\P\{X_k\equiv\, m\ \hbox{\rm (mod $h$)}\}}
&\,=\,&\sum_{k=1}^n\frac{ \P\{Y_k\not\equiv\, m\ \hbox{\rm (mod $h$)}\}}{1-\P\{Y_k\not\equiv\, m\ \hbox{\rm (mod $h$)}\}}
\cr&\ge &  -\log\big(\frac{1}{h}+H_n\big).
\end{eqnarray*}
\end{proof}
\begin{remark}\label{kb.rem} (i) Note   that the   bound used in \eqref{kb} is very weak since  
\begin{eqnarray*}   \prod_{k=1}^n  \P\{Y_k\equiv\, m\ \hbox{\rm (mod $h$)}\}
  \ =\
{\mathbb P}\big\{ \forall J\subset [1,n],\ \sum_{k\in J}   Y_k\equiv\, m\ \hbox{\rm (mod $h$)}\big\} .
\end{eqnarray*} 
One can replace individuals $Y_k$ by sums over blocks according to any partition of $\{1,\ldots,n\}$.
\vskip 2 pt \noindent  (ii) Sets of multiples serve as good test sets  for the applicability of the local limit theorem because addition is a closed operation. What can be derived when testing the applicability of the local limit theorem with other remarkable sets of integers (squarefree numbers, primes numbers, power numbers, geometric growing sequences, \ldots) is unknown. 
 Concerning the squarefree integers, namely
having no squared prime factors,  we note the bound \begin{equation}\label{squarefree} \Big|2^{-n}
\sum_{j\,  { \rm squarefree}} C_n^j  - \frac{6}{\pi^2}\Big|\le C_1e^{-C_2  {(\log 
n^{{3/5}}}/{(\log\log  n) ^{1/5}}}.
\end{equation}
   We refer to \cite{DS}.
\end{remark} 
\section{Random sequences satisfying  the a.u.d. property}\label{s3}
It has some interest to relate the a.u.d. property for Bernoulli sums   to the one   of sets having Euler density, in this particular case here, arithmetic progressions. A subset $A$ of $  \N$ is said to have Euler density $\l$ 
with parameter
$\varrho $ (in short ${ E}_\varrho$ density $\l$) if 
\begin{equation*} 
\lim_{n\to \infty}\sum_{j\in A} C_n^j \varrho^j (1-\varrho)^{n-j}= \l.
\end{equation*}
 By a result due to  Diaconis and Stein, we have the following characterization.
   \begin{theorem}[\cite{DS},\,Th.\,1]\label{ds}For any $A\subset \N$, and $\varrho\in ]0,1[$ the following assertions are equivalent:
 \begin{eqnarray*} \label{dschar} ({\rm i})  & &\qq \hbox{\it $A$ has  ${E}_\varrho$ density $\l$},
  \cr   ({\rm ii})  & &\qq\lim_{t\to \infty} e^{-t}\sum_{j\in A} \frac{t^j}{j!}=\l,
 \cr ({\rm iii})  & &\qq \hbox{\it  for all $\e>0$}, \quad    \lim_{n\to \infty} \frac{\#\{j\in A : n\le j< n+\e\sqrt n\} }{\e\sqrt n} 
 =\l . 
 \end{eqnarray*}
\end{theorem}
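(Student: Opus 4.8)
The plan is to prove the equivalence (i) $\Leftrightarrow$ (ii) $\Leftrightarrow$ (iii) for an arbitrary subset $A\subset\N$ and a fixed $\varrho\in\,]0,1[$, by establishing that the three weighted averages — binomial with parameter $\varrho$, Poisson with large parameter $t$, and the renormalized counting density of $A$ in short dyadic-scale windows $[n,n+\e\sqrt n)$ — all have the same limit $\l$ (or all fail to converge) simultaneously. The natural route is to pass through the Poisson condition (ii) as a hub: first show (i) $\Leftrightarrow$ (ii), then (ii) $\Leftrightarrow$ (iii). The underlying principle is that $\sum_{j\in A}C_n^j\varrho^j(1-\varrho)^{n-j}$ is the probability $\P\{S_n\in A\}$ for $S_n$ a sum of $n$ i.i.d. Bernoulli($\varrho$) variables, and $e^{-t}\sum_{j\in A}t^j/j!$ is $\P\{N_t\in A\}$ for a Poisson process; both distributions concentrate on a window of width $\asymp\sqrt{n}$ (resp. $\sqrt t$) around their mean, and within such a window the individual point masses are nearly equal, so testing membership in $A$ amounts to measuring the relative density of $A$ in that window.

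First I would treat (i) $\Leftrightarrow$ (ii). One clean way is a depoissonization/poissonization argument: if $T$ is Poisson with mean $t$, then conditionally on $T=n$ the Poisson-distributed count of points falling in a sub-interval is Binomial. Concretely, write $e^{-t}\sum_{j\in A}t^j/j! = \sum_{n\ge 0}\P\{T=n\}\,b_n(A)$ where $b_n(A)=\sum_{j\in A}C_n^j\varrho^j(1-\varrho)^{n-j}$ with the right coupling of parameters, i.e. split a Poisson($t$) process by independent $\varrho$-thinning. Since $\P\{T=n\}$ concentrates near $n\approx t$ with fluctuations $O(\sqrt t)$, and since $b_n(A)$ varies slowly in $n$ (because shifting $n$ by $O(\sqrt n)$ perturbs the binomial law only by $O(1)$ in total variation — a standard Stirling/CLT estimate), a Cesàro-type averaging lemma gives that $\lim_n b_n(A)=\l$ implies $\lim_t \P\{T\in A\}=\l$. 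The converse, that (ii) forces (i), is a genuine Abelian/Tauberian point: it requires knowing that the Poisson mixture cannot converge while the $b_n$ oscillate, which follows from the slow-variation of $n\mapsto b_n(A)$ together with the fact that consecutive $b_n$ differ by $o(1)$; I would prove $|b_{n+1}(A)-b_n(A)|\to 0$ and then invoke that a sequence whose increments tend to zero and whose smooth (Poisson) averages converge must itself converge.

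Next, (ii) $\Leftrightarrow$ (iii). Here I would expand $e^{-t}\sum_{j\in A}t^j/j!$ and compare it with the counting functional $\frac{1}{\e\sqrt n}\#\{j\in A: n\le j<n+\e\sqrt n\}$ evaluated at $n=\lfloor t\rfloor$. The Poisson mass $e^{-t}t^j/j!$ as a function of $j$ is, by the local CLT for the Poisson law, approximately $\frac{1}{\sqrt{2\pi t}}e^{-(j-t)^2/(2t)}$, a Gaussian bump of width $\sqrt t$; partitioning the $j$-axis into consecutive blocks of length $\e\sqrt t$ and using that within each block the density of $A$ converges to $\l$ under (iii) — uniformly in the block's position over the $O(1/\e)$ blocks that carry essentially all the Gaussian mass — gives $e^{-t}\sum_{j\in A}t^j/j!\to\l\int_{\R}\frac{1}{\sqrt{2\pi}}e^{-x^2/2}\,dx=\l$, with the tails beyond $|j-t|>C\sqrt t/\sqrt\e$ controlled by the Gaussian tail bound already used in the proof of Theorem~\ref{l1a}. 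Conversely, assuming (ii), one recovers (iii) by the standard device of taking differences of the functional at $t$ and $t+\e\sqrt t$: $e^{-t'}\sum t'^j/j! - e^{-t}\sum t^j/j!$ isolates, up to $o(1)$, the average of the indicator $1_A$ over the window $[t,t+\e\sqrt t)$, so convergence of the Poisson averages transfers to convergence of the window densities.

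The main obstacle I anticipate is the Tauberian direction, specifically (ii) $\Rightarrow$ (i) and (ii) $\Rightarrow$ (iii): passing from convergence of a smoothed (Poisson-averaged) quantity back to convergence of the "raw" binomial probabilities or window densities is exactly the kind of step where a naive Abelian argument is insufficient and one needs the regularity input $|b_{n+1}(A)-b_n(A)|\to 0$ (and its analogue for windows). Establishing that increment estimate cleanly — it is essentially the statement that the Binomial($n,\varrho$) and Binomial($n+1,\varrho$) laws are $o(1)$-close in total variation, which is true but needs a short computation with the explicit ratio of the point masses and a dominated-convergence or Stirling argument — is the technical heart. Everything else (the Gaussian tail truncations, the block decomposition, the poissonization coupling) is routine and parallels estimates already appearing in the proof of Theorem~\ref{l1a}.
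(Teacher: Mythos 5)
First, a point of comparison: the paper does not prove Theorem~\ref{ds} at all --- it is quoted from Diaconis and Stein \cite{DS} and used as a black box --- so your proposal is measured against the actual content of that theorem rather than against an internal argument. Your Abelian directions are fine: the poissonization identity $e^{-\varrho t}\sum_{j\in A}(\varrho t)^j/j!=\sum_{n}e^{-t}t^n/n!\,b_n(A)$ with $b_n(A)=\sum_{j\in A}C_n^j\varrho^j(1-\varrho)^{n-j}$ is correct and gives (i)~$\Rightarrow$~(ii), and the block decomposition of the Poisson mass gives (iii)~$\Rightarrow$~(ii). But both Tauberian directions rest on steps that fail. For (ii)~$\Rightarrow$~(i) you invoke the lemma ``increments tending to zero plus convergent Poisson (Borel) averages imply convergence of the sequence.'' This is false: take $s_n=\sin(n^{3/4})$, whose increments are $O(n^{-1/4})\to 0$; on the bulk $|n-t|\le C\sqrt t$ one has $n^{3/4}=t^{3/4}+\tfrac34 t^{-1/4}(n-t)+o(1)$, so the Gaussian averaging kills the oscillation and the Borel means converge to $0$, while $s_n$ diverges. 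The usable Tauberian condition for Borel summability is $s_{n+1}-s_n=o(n^{-1/2})$, and your sequence $b_n(A)$ only satisfies the $O(n^{-1/2})$ bound (the total variation distance between $\mathrm{Bin}(n,\varrho)$ and $\mathrm{Bin}(n+1,\varrho)$ is genuinely of order $n^{-1/2}$, since the Poisson/binomial spread is $\sqrt n$). So this step needs the Hardy--Littlewood $O(n^{-1/2})$ Tauberian theorem for Borel summability, a result comparable in depth to the one being proved --- not the soft ``slow variation'' lemma you describe.

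The gap in (ii)~$\Rightarrow$~(iii) is of the same nature but more visible. With $t'=t+\e\sqrt t$, the kernel $K_{t'}-K_t$ is the difference of two Gaussian bumps of width $\sqrt t$ shifted by $\e\sqrt t$, i.e.\ approximately $\e\sqrt t\,\partial_xK_t$: a dipole-shaped kernel still spread over a window of width $\sqrt t$, of total unsigned mass $O(\e)$. It does not approximate $\frac1{\e\sqrt t}\,1_{[t,t+\e\sqrt t)}$, so the difference of the two Borel means isolates nothing about the density of $A$ in that short window; recovering box averages at scale $\e\sqrt t$ from Gaussian averages at scale $\sqrt t$ is a deconvolution problem, not a differencing problem. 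The standard repair is the substitution $s=\sqrt j$, under which the windows $[n,n+\e\sqrt n)$ become translation windows $[\sqrt n,\sqrt n+\e/2)$ of fixed length and the Poisson kernel becomes a fixed-width Gaussian; Wiener's Tauberian theorem (the Gaussian Fourier transform never vanishes) then converts convergence of the Gaussian averages into convergence of the averages against any $L^1$ kernel, in particular the box kernel, for every $\e>0$. In short, you correctly identify the Tauberian directions as the heart of the matter, but the two mechanisms you propose there (increments $\to0$; finite differencing of the Borel transform) are insufficient, and genuinely Tauberian machinery is unavoidable --- which is presumably why the paper cites \cite{DS} instead of supplying a proof.
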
 
Applying (iii) with $\rho=\frac12$, to
\begin{equation}\label{setA}A= \{  u+kd,\ k\ge 1\},
\end{equation} 
straightforwardly implies
\begin{lemma}\label{ber.uad}
Let  $\B_n=\b_1+\ldots+\b_n$,  where   $ \b_i  $ are i.i.d.  Bernoulli random variables. Then $\{\B_n, n\ge 1\}$ is     a.u.d.($d$) for any $d \ge 2$.
\end{lemma}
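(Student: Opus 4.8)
The plan is to recognise the residue-class probability $\P\{\B_n\equiv m\ \hbox{\rm (mod $d$)}\}$ as precisely the partial sum that defines an Euler density, and then to read off that density from the combinatorial characterisation {\rm(iii)} of Theorem \ref{ds}. Write $\varrho\in(0,1)$ for the common parameter of the $\b_i$, so that $\P\{\B_n=j\}=C_n^j\varrho^j(1-\varrho)^{n-j}$ for $0\le j\le n$. Fix $d\ge 2$ and $m\in\{0,1,\ldots,d-1\}$, and set
\begin{equation*}
A_m=\{\,j\ge 0:\ j\equiv m\ \hbox{\rm (mod $d$)}\,\}=\{\,m+kd:\ k\ge 0\,\},
\end{equation*}
a set of the form \eqref{setA}. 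Then, by the very definition of $\B_n$,
\begin{equation*}
\P\big\{\B_n\equiv m\ \hbox{\rm (mod $d$)}\big\}=\P\{\B_n\in A_m\}=\sum_{j\in A_m}C_n^j\varrho^j(1-\varrho)^{n-j},
\end{equation*}
so the statement will follow once we show that $A_m$ has $E_\varrho$ density $1/d$ for each such $m$.

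First I would verify condition {\rm(iii)} of Theorem \ref{ds} for $A_m$ with $\l=1/d$ (the window width $\e\sqrt n$ being the case $\rho=\tfrac12$). Since consecutive elements of the arithmetic progression $A_m$ are spaced exactly $d$ apart, an interval $[n,n+\e\sqrt n)$ of length $\e\sqrt n$ contains $\e\sqrt n/d+O(1)$ of them as $n\to\infty$; dividing by $\e\sqrt n$ and letting $n\to\infty$ (with $\e$ fixed) gives
\begin{equation*}
\lim_{n\to\infty}\frac{\#\{\,j\in A_m:\ n\le j<n+\e\sqrt n\,\}}{\e\sqrt n}=\frac1d
\end{equation*}
for every $\e>0$; note this limit does not depend on $\varrho$. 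By the equivalence {\rm(iii)}$\Rightarrow${\rm(i)} of Theorem \ref{ds}, applied with this $\varrho$ and $\l=1/d$, the set $A_m$ has $E_\varrho$ density $1/d$, that is,
\begin{equation*}
\lim_{n\to\infty}\sum_{j\in A_m}C_n^j\varrho^j(1-\varrho)^{n-j}=\frac1d .
\end{equation*}
Combining this with the identity above yields $\P\{\B_n\equiv m\ \hbox{\rm (mod $d$)}\}\to 1/d$ as $n\to\infty$. As $d\ge 2$ and $m\in\{0,\ldots,d-1\}$ were arbitrary, $\{\B_n,\ n\ge 1\}$ is a.u.d.($d$) for every $d\ge 2$, hence a.u.d.

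There is essentially no analytic obstacle here: Theorem \ref{ds} is doing all the work, and the only care needed lies in three elementary bookkeeping points — identifying the residue-class probability with the Euler-density partial sum, noting that a finite modification of the progression (passing from $\{u+kd:k\ge 1\}$ to $\{m+kd:k\ge 0\}$) changes neither side, and observing that the density produced by {\rm(iii)} is simply the plain density $1/d$ of the progression, independent of the Bernoulli parameter. (Alternatively one could bypass Theorem \ref{ds} and argue directly by the roots-of-unity filter, using that $|\E e^{2i\pi r\b_1/d}|=|1-\varrho+\varrho e^{2i\pi r/d}|<1$ for $r=1,\ldots,d-1$ when $0<\varrho<1$, so that only the $r=0$ term survives in the limit; but the route through Theorem \ref{ds} is the one the preceding discussion sets up.)
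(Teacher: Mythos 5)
Your proposal is correct and follows exactly the paper's route: the paper's proof is the one-line observation that applying condition (iii) of Theorem \ref{ds} to the arithmetic progression $A$ of \eqref{setA} yields Euler density $1/d$, hence the limit \eqref{aud1}; you have simply written out the same argument in full detail (and for general parameter $\varrho$, where the paper takes $\varrho=\tfrac12$).
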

\vskip 10 pt



Now consider the independent case and introduce the following characteristic. 
 Let $Y$ be
  a random variable with values in $\Z$. Put
  \begin{eqnarray}\label{vartheta}  \t_Y =\sum_{k\in \Z}{\mathbb P}\{Y= k\}\wedge{\mathbb P}\{Y= k+1 \} ,
\end{eqnarray}
where $a\wedge b=\min(a,b)$. Note    that $0\le \t_Y<1$.

 \begin{theorem} \label{t3}  Let $  X= \{ X_j , j\ge 1\}$ be a sequence of independent  random variables taking  values in $\Z$. Assume that   $\t_{X_j}>0$ for each $j$. Further   assume that the series 
 $\sum_{j=1}^\infty \t_{X_j}$ diverges. Then $X$ is     a.u.d., 
  the conclusion holds in particular if the $X_j$ are i.i.d. and $\t_{X_1}>0$.
 \end{theorem}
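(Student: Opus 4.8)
The plan is to reduce the a.u.d.\ property to the asymptotic behaviour of the characteristic functions $\E e^{2i\pi r S_n/h}$, and then to control these via a coupling/maximal-coupling interpretation of $\t_{X_j}$. First I would fix $h\ge 2$ and $r\in\{1,\dots,h-1\}$ and recall that, since the $X_j$ are independent,
\beq
\E e^{\frac{2i\pi r}{h}S_n}\,=\,\prod_{j=1}^n \E e^{\frac{2i\pi r}{h}X_j},
\eeq
so it suffices to show that each partial product tends to $0$, and for that it is enough to bound $|\E e^{\frac{2i\pi r}{h}X_j}|$ by something like $1-c\,\t_{X_j}$ with $c=c(h)>0$; then the divergence of $\sum_j \t_{X_j}$ forces the product to $0$, and a standard Fourier inversion (writing ${\mathbb P}\{S_n\equiv\m\ (\mathrm{mod}\ h)\}=\frac1h\sum_{r=0}^{h-1} e^{-2i\pi r\m/h}\E e^{2i\pi r S_n/h}$) gives \eqref{aud1}.

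The heart of the matter is the elementary inequality $|\E e^{i t Y}|\le 1-\t_Y(1-\cos t)$, or a variant sufficient for our purposes, valid for any $\Z$-valued $Y$. The idea behind it is the maximal-coupling decomposition associated with $\t_Y$: the quantity $\t_Y=\sum_k {\mathbb P}\{Y=k\}\wedge{\mathbb P}\{Y=k+1\}$ is exactly the total mass one can ``match'' between the law of $Y$ and the law of $Y$ shifted by $1$, so one can write the distribution of $Y$ as a mixture $\t_Y\,\mu_1 + (1-\t_Y)\,\mu_2$ where $\mu_1$ is a law of the form $\frac12(\d_{a}+\d_{a+1})$ averaged over some auxiliary randomness — more precisely, $\t_Y \cdot(\text{a law invariant, in a suitable averaged sense, under }+1)$. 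Taking characteristic functions, $\E e^{itY}=\t_Y\,\widehat{\mu_1}(t)+(1-\t_Y)\,\widehat{\mu_2}(t)$ with $|\widehat{\mu_1}(t)|\le|\cos(t/2)|$ and $|\widehat{\mu_2}(t)|\le 1$, whence $|\E e^{itY}|\le 1-\t_Y(1-|\cos(t/2)|)$. Plugging $t=2\pi r/h$ with $1\le r\le h-1$ gives $1-|\cos(\pi r/h)|\ge 1-\cos(\pi/h)=:c_h>0$, so $|\E e^{\frac{2i\pi r}{h}X_j}|\le 1-c_h\,\t_{X_j}$, uniformly in $r$.

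Combining, $\big|\E e^{\frac{2i\pi r}{h}S_n}\big|\le \prod_{j=1}^n(1-c_h\t_{X_j})\le \exp\big(-c_h\sum_{j=1}^n\t_{X_j}\big)\to 0$ as $n\to\infty$ by hypothesis, for every $r=1,\dots,h-1$. Hence by the inversion formula above, ${\mathbb P}\{S_n\equiv\m\ (\mathrm{mod}\ h)\}\to \frac1h$ for every $\m$ and every $h\ge2$, i.e.\ $X$ is a.u.d. The i.i.d.\ case is immediate: if $\t_{X_1}>0$ then $\sum_j\t_{X_j}=\sum_j\t_{X_1}=\infty$. The step I expect to be the main obstacle — or at least the one needing the most care — is making the mixture/coupling decomposition of the law of a general $\Z$-valued $Y$ rigorous and extracting from it the clean bound $|\widehat{\mu_1}(t)|\le|\cos(t/2)|$; an alternative, purely computational route is to expand $|\E e^{itY}|^2 = \sum_{k,l}{\mathbb P}\{Y=k\}{\mathbb P}\{Y=l\}\cos((k-l)t)$ and bound the deficit from $1$ below by grouping the pairs $(k,k+1)$ that realize $\t_Y$, which avoids coupling language at the cost of a slightly messier estimate but leads to the same conclusion.
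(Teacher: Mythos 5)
Your proposal is correct, and it rests on the same structural fact as the paper's proof: each $X_j$ contains a Bernoulli component of total mass $\t_{X_j}$. This is exactly Lemma \ref{lemd} (the Bernoulli part extraction), so the mixture decomposition you flag as the delicate step is already rigorous: conditionally on $\e=1$ the variable $Z=V+\e L$ is uniform on a pair $\{v_k,v_k+1\}$, the event $\{\e=1\}$ has probability $\t$, and this yields precisely your bound $|\widehat{\mu_1}(t)|\le|\cos(t/2)|$, hence $|\E e^{itX_j}|\le 1-\t_{X_j}(1-|\cos(t/2)|)$. Where you genuinely diverge is in how the Bernoulli component is exploited. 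The paper keeps the coupling probabilistic: it conditions on $(V,\e)$, computes $\P_{L}\{d\,|\,\sum_{j\le B_n}L_j+W_n+u\}$ exactly as $\frac1d+\frac1d\sum_{j=1}^{d-1}e^{2i\pi j(W_n+u)/d}(\cos(\pi j/d))^{B_n}$, invokes the almost sure divergence of $B_n=\sum_{j\le n}\e_j$, and concludes by dominated convergence. You instead push the decomposition through the characteristic function once and for all, obtaining the deterministic bound $|\E e^{2i\pi rS_n/h}|\le\prod_{j=1}^{n}(1-c_h\t_{X_j})\le e^{-c_h\nu_n}$ with $c_h=1-\cos(\pi/h)$ uniformly in $1\le r\le h-1$, and finish by finite Fourier inversion over residues mod $h$ (the relevance of the vanishing of these products is the content of \eqref{aud.dw.ns.}). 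Your route is more elementary in that it dispenses with the almost-sure step and the dominated convergence argument, and it gives an explicit rate $\frac{h-1}{h}e^{-c_h\nu_n}$ for each fixed $h$ --- a quantitative conclusion the paper only reaches in Theorems \ref{saud1} and \ref{saud2}, after reintroducing concentration of $B_n$ around $\nu_n$. The trade-off is that since $c_h\asymp h^{-2}$, your bound degrades as $h$ grows, whereas the coupling route, combined with Proposition \ref{special.cases}, is what allows the paper to treat divisors $d$ growing with $n$ up to nearly $\sqrt{\nu_n}$.
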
 
   Note that  no integrability condition is required, whereas square integrability is required in order that the local limit theorem be applicable.   
 We   prove in the next section  that  if  the series 
$\sum_{j=1}^\infty \t_{X_j}$ diverges, much more is in fact true.   
Under the assumption made, each $X_j$ admits a Bernoulli component. This is the principle of a   coupling method (the Bernoulli part extraction) introduced by McDonald \cite{M}, Davis and McDonald  \cite{MD}  in the study of the local limit theorem. See    Weber \cite{W} for an application of this method  to almost sure local limit theorem, and   Giuliano and Weber  \cite{GW3} where this method is used to obtain   approximate local limit theorems with effective rate.

 \vskip 3 pt   Before passing to the proof, we briefly recall some facts and state an auxiliary  Lemma. Let    $\mathcal L(v_0,D)$ be a lattice  defined by the
 sequence $v_{ k}=v_{ 0}+D k$, $k\in \Z$,  
 $v_{0} $ and $D >0$ being  real numbers. Let $X$ be
  a random variable   such that  ${\mathbb P}\{X
\in\mathcal L(v_0,D)\}=1$, and assume  that 
$\t_X>0$.  
Let $ f(k)= {\mathbb P}\{X= v_k\}$, $k\in \Z$. Let also $0<\t\le\t_X$. Associate to $\t$ and $X$  a
sequence $  \{ \tau_k, k\in \Z\}$     of   non-negative reals such that
\begin{equation}\label{basber0}  \tau_{k-1}+\tau_k\le 2f(k), \qq  \qq\sum_{k\in \Z}  \tau_k =\t.
\end{equation}
For instance $\tau_k=  \frac{\t}{\nu_X} \, (f(k)\wedge f(k+1))  $ is suitable.
Next   define   a pair of random variables $(V,\e)$   as follows:
  \begin{eqnarray}\label{ve} \qq\qq\begin{cases} {\mathbb P}\{ (V,\e)=( v_k,1)\}=\tau_k,      \cr
 {\mathbb P}\{ (V,\e)=( v_k,0)\}=f(k) -{\tau_{k-1}+\tau_k\over
2}    .  \end{cases}\qq (\forall k\in \Z)
\end{eqnarray}

 \begin{lemma} \label{lemd} Let $L$
be a Bernoulli random variable    which is independent of  $(V,\e)$, and let  $Z= V+ \e DL$. Then $Z\buildrel{\mathcal D}\over{ =}X$.
\end{lemma}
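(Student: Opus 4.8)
The plan is to verify directly that the probability generating mass function of $Z = V + \e D L$ at each lattice point $v_j$ coincides with $f(j) = \P\{X = v_j\}$. Since $L$ is Bernoulli, it takes values in $\{0,1\}$; I will not assume a symmetric Bernoulli, so write $p = \P\{L=1\}$, $q = \P\{L=0\} = 1-p$ (and the final identity should turn out to be independent of $p$, which is the telltale structural feature to check). Because $L$ is independent of the pair $(V,\e)$, I can condition on the value of $(V,\e)$ and sum.

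First I would enumerate the ways $Z$ can equal $v_j$. If $\e = 0$ then $Z = V$, so we need $V = v_j$, contributing $\P\{(V,\e) = (v_j,0)\} = f(j) - \tfrac{\tau_{j-1}+\tau_j}{2}$. If $\e = 1$ then $Z = V + DL$; since $V \in \mathcal L(v_0,D)$ and $DL \in \{0,D\}$, we have $Z = v_j$ exactly when either $V = v_j$ and $L = 0$, or $V = v_{j-1}$ and $L = 1$. These contribute $\P\{(V,\e)=(v_j,1)\}\cdot q + \P\{(V,\e)=(v_{j-1},1)\}\cdot p = \tau_j q + \tau_{j-1} p$. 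Adding the three pieces,
\begin{equation*}
\P\{Z = v_j\} = f(j) - \frac{\tau_{j-1}+\tau_j}{2} + \tau_j q + \tau_{j-1} p.
\end{equation*}

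Now I would collect terms: the $\tau_j$-terms give $\tau_j(q - \tfrac12) = \tau_j(\tfrac12 - p)$, and the $\tau_{j-1}$-terms give $\tau_{j-1}(p - \tfrac12)$. So $\P\{Z=v_j\} = f(j) + (\tfrac12 - p)(\tau_j - \tau_{j-1})$. For this to equal $f(j)$ for all $j$ one needs either $p = \tfrac12$ or $\tau_j = \tau_{j-1}$ for all $j$; the clean statement $Z \buildrel{\mathcal D}\over{=} X$ as written therefore presupposes $L$ is the \emph{symmetric} Bernoulli ($p = q = \tfrac12$), under which the last term vanishes identically and $\P\{Z = v_j\} = f(j) = \P\{X = v_j\}$ for every $j \in \Z$, giving the claim. (One should also double-check that \eqref{ve} genuinely defines a probability distribution — nonnegativity of $f(k) - \tfrac{\tau_{k-1}+\tau_k}{2}$ follows from the first constraint in \eqref{basber0}, and the masses sum to $\sum_k \tau_k + \sum_k (f(k) - \tfrac{\tau_{k-1}+\tau_k}{2}) = \t + (1 - \t) = 1$ using $\sum_k \tau_k = \t$ and $\sum_k f(k) = 1$; this legitimizes the computation.)

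The computation itself is entirely routine; the only genuine point of care — and the place I would expect the main subtlety — is the bookkeeping of which lattice site $V$ must sit at when $\e = 1$ and $L$ shifts by $D$, i.e. making sure the index shift $v_{j-1} \mapsto v_j$ is handled on the correct side, and being explicit that the construction is tuned precisely so that the "half-weights" $\tfrac{\tau_{k-1}+\tau_k}{2}$ subtracted off in the $\e=0$ branch are exactly restored, in a $p$-independent way, by the symmetric Bernoulli smearing in the $\e=1$ branch. I would present the proof as: (1) check \eqref{ve} is a probability law; (2) condition on $(V,\e)$ and enumerate the three contributions to $\{Z = v_j\}$; (3) sum and simplify to get $f(j)$, invoking the symmetry of $L$; (4) conclude $Z \buildrel{\mathcal D}\over{=} X$ since the distributions agree at every lattice point and $X$ is supported on $\mathcal L(v_0,D)$.
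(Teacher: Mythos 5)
Your verification is correct, and it is the standard argument for the Bernoulli part extraction: the paper itself states Lemma \ref{lemd} without proof (it is quoted from the Davis--McDonald coupling literature), so there is nothing in the text to compare against beyond the construction \eqref{basber0}--\eqref{ve}, which your computation matches exactly, including the index bookkeeping $V=v_{j-1}$, $L=1$ versus $V=v_j$, $L=0$ in the $\e=1$ branch. Your observation that the identity forces $p=\tfrac12$ is also right and consistent with the paper's convention: in the proof of Theorem \ref{t3} the factor $(\cos \frac{\pi j}{d})^{B_n}$ arises precisely from $\E e^{2i\pi jL/d}$ for the symmetric Bernoulli law, so $L$ is indeed a fair coin throughout.
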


\begin{proof}[Proof of Theorem \ref{t3}]  
    We apply   Lemma \ref{lemd} with $D=1$ to each $X_j$, and choose $0<\t_j\le\t_{X_j}$ so that the series $\sum_{j=1}^\infty \t_j$ diverges.
One can  associate to them a
sequence of independent vectors $ (V_j,\e_j, L_j) $,   $j=1,\ldots,n$  such that
 \begin{eqnarray}\label{dec0} \big\{V_j+\e_j    L_j,j=1,\ldots,n\big\}&\buildrel{\mathcal D}\over{ =}&\big\{X_j, j=1,\ldots,n\big\}  .
\end{eqnarray}

Further the sequences $\{(V_j,\e_j),j=1,\ldots,n\}
 $   and $\{L_j, j=1,\ldots,n\}$ are independent.
For each $j=1,\ldots,n$, the law of $(V_j,\e_j)$ is defined according to (\ref{ve}) with $\t=\t_j$.  And $\{L_j, j=1,\ldots,n\}$ is  a sequence  of
independent Bernoulli random variables. Set
\begin{equation}\label{dec} 
W_n =\sum_{j=1}^n V_j,\qq M_n=\sum_{j=1}^n  \e_jL_j,  \quad B_n=\sum_{j=1}^n
 \e_j .
\end{equation} 
  Denoting again $X_j= V_j+ \e_jL_j$, 
$j\ge 1$, we have
\begin{eqnarray}\label{dep} \P \{d|S_n  +u\}   &=& \E_{(V,\e)}   \,   \P_{\!L}
\Big\{d|\big(   \sum_{j= 1}^n \e_jL_j+W_n  \big)+u
\Big\}
. \end{eqnarray}
As    $\sum_{j= 1}^n \e_jL_j\buildrel{\mathcal D}\over{ =}\sum_{j=1}^{B_n } L_j$, we have
 \begin{eqnarray*}        \P_{\!L}
\Big\{d|\big(  \sum_{j= 1}^n \e_jL_j+W_n  \big)+u
\Big\} &=&    \P_{\!L}
\Big\{d|  \,  \sum_{j=1}^{B_n } L_j+\big(W_n   +u\big)
\Big\}. 
\end{eqnarray*}
  
In view of the dominated convergence theorem, it suffices to prove that for each $d\ge 2$,
 \begin{eqnarray*}       \P_{\!L}
\Big\{d| \,    \sum_{j=1}^{B_n } L_j+ (W_n   +u) \Big\}\  \to \frac{1}{d},
\end{eqnarray*}
as $n\to \infty$, $\P_{(V,\e)}$ almost surely. 
But the set (compare with \eqref{setA}) 
\begin{equation*} A= \{   (W_n   +u)+kd,\ k\ge 1\},
\end{equation*}
now  depends on $W_n$, thus on $n$, which is complicating things. 
However we can write
 \begin{equation*}
 \chi\Big({d\, \big|\,\sum_{j=1}^{B_n } L_j+ (W_n   +u)}\Big)=\frac1d\,\sum_{j=0}^{d-1} e^{2i\pi  {j \over d}  (W_n   +u)}e^{2i\pi  {j \over d}\sum_{j=1}^{B_n } L_j}.
\end{equation*}
By integrating with respect to $ \P_{\!L}$ we get,
 $$ \P_{\!L}
\Big\{d| \,   \sum_{j=1}^{B_n } L_j+\big(W_n   +u\big)
\Big\}={1\over d}+{1\over d}\sum_{j=1}^{d-1} e^{2i\pi  {j \over d}  (W_n   +u)}\big(\cos { \pi j\over d}\big)^{B_n}
 .$$
By the assumption made, 
$B_n$ tends to infinity $\P_{(V,\e)}$ almost surely, ((8.3.5) in \cite{W2} for instance). Thus the latter sum tends to 0 as $n\to \infty$, $\P_{(V,\e)}$ almost surely.
Therefore by the convergence argument invoked before, $ \P \{d|S_n  +u\}$ tends to ${1\over d}$ as $n$ tends to infinity, for any  $d\ge 2$ and $u\in \N$. Whence it follows that the sequence $\{S_n, n\ge 1\}$ is     {\rm a.u.d.}\,.
\end{proof}

\section{Random sequences satisfying a strenghtened    a.u.d. property.}\label{s4}
  For Bernoulli sums, the  a.u.d.  property is   only a rough  aspect of the value distribution of divisors of $\B_{ n}+u$, $u\ge 0$ integer. Much more is known. 
\begin{theorem}[\cite{W3},\, Th.\,2.1]\label{estPdlBn.u} 
   We have the uniform estimate
\begin{equation*}
 \sup_{u\ge 0}\,\sup_{2\le d\le n}\Big|\P\big\{  d|   \B_{ n}+u \big\}-  {1\over d}\sum_{ 0\le |j|< d }
e^{i\pi (2u+n){j\over d}}\  e^{  -n  
{\pi^2j^2\over 2d^2}}\Big|= {\mathcal O}\big((\log n)^{5/2}n^{-3/2}\big).
\end{equation*}
\end{theorem}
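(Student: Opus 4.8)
The plan is to evaluate $\P\{d\mid\B_n+u\}$ exactly by orthogonality and then compare it, term by term, with the theta sum. Using the divisibility indicator $\chi(d\mid m)=\frac1d\sum_{j=0}^{d-1}e^{2\pi ijm/d}$ together with $\E e^{2\pi ij\b_1/d}=\tfrac{1+e^{2\pi ij/d}}{2}=e^{i\pi j/d}\cos(\pi j/d)$ and independence, one obtains
\begin{equation*}
\P\{d\mid\B_n+u\}=\frac1d\sum_{j=0}^{d-1}e^{i\pi(2u+n)j/d}\big(\cos(\pi j/d)\big)^n .
\end{equation*}
The summand is $d$-periodic in $j$ (since $\cos(\pi(j+d)/d)=-\cos(\pi j/d)$ and $(-1)^{2u+n}(-1)^{n}=1$), so the sum may be carried over a symmetric residue system $|j|<d/2$, any term with $\cos(\pi j/d)=0$ — possible only for $d$ even, $j=d/2$ — being discarded. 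This already exhibits the theta sum of the statement, the only discrepancies being that $(\cos(\pi j/d))^n$ must be replaced by the Gaussian weight $e^{-n\pi^2j^2/(2d^2)}$, and that the stated sum carries the extra outer range $d/2\le|j|\le d-1$.

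Those extra outer terms are negligible: each carries a factor $e^{-n\pi^2j^2/(2d^2)}\le e^{-n\pi^2/8}$, so they contribute $O(e^{-n\pi^2/8})$ after summation over $\le d$ indices and division by $d$. For the comparison on $|j|<d/2$ I would split at the cutoff $J=\frac2\pi d\sqrt{(\log n)/n}$, calibrated so that $e^{-n\pi^2J^2/(2d^2)}=n^{-2}$; one always has $J<d/2$ for $d\ge2$, and we take $n$ beyond an absolute threshold so that in addition $\pi J/d\le1$, the finitely many exceptional $n$ being absorbed into the implied constant. On $J<|j|<d/2$ the elementary inequality $\cos x\le e^{-x^2/2}$ on $[0,\pi/2)$ bounds both $(\cos(\pi j/d))^n$ and $e^{-n\pi^2j^2/(2d^2)}$ by $e^{-n\pi^2J^2/(2d^2)}=n^{-2}$, so this range costs only $O(n^{-2})$.

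On the bulk range $|j|\le J$ one has $\pi j/d\le2\sqrt{(\log n)/n}=:\delta\le1$, and the expansion $-\log\cos x=\tfrac{x^2}{2}+O(x^4)$, uniform for $|x|\le1$, yields $(\cos(\pi j/d))^n=e^{-n\pi^2j^2/(2d^2)}e^{nR_j}$ with $-n\delta^4\le nR_j\le0$ and $n\delta^4=O\big((\log n)^2/n\big)\to0$; hence $\big|(\cos(\pi j/d))^n-e^{-n\pi^2j^2/(2d^2)}\big|\le n\delta^4\,e^{-n\pi^2j^2/(2d^2)}\le n\delta^4$. Since there are at most $3J$ indices with $|j|\le J$ (only $j=0$, with zero difference, when $J<1$), the bulk range contributes at most $\frac1d\cdot3J\cdot n\delta^4=3n\delta^4(J/d)=\frac3\pi n\delta^5=O\big((\log n)^{5/2}n^{-3/2}\big)$, using $\delta^5\asymp(\log n)^{5/2}n^{-5/2}$. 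Adding the three contributions gives the asserted estimate; it is uniform in $2\le d\le n$, and uniform in $u\ge0$ automatically, since $u$ enters only through the unimodular phase $e^{i\pi(2u+n)j/d}$.

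The routine ingredients are the Taylor estimate for $-\log\cos$ at the origin and the Gaussian tail bounds. The delicate point is the calibration of the cutoff $J$: it must be small enough that $\pi J/d$ stays well inside $(-\pi/2,\pi/2)$ so that the Taylor remainder $R_j$ is controlled, yet large enough that the truncation error $n^{-2}$ is negligible next to $n^{-3/2}$ — and the balancing choice $J\asymp d\sqrt{(\log n)/n}$ is precisely what manufactures the logarithmic factor $(\log n)^{5/2}$. The one genuine bookkeeping nuisance is the small-$d$ or small-$n$ regime, where $J<1$ or $\pi J/d>1$; there the whole sum over $j\neq0$ already lies in the Gaussian-tail regime and is dispatched by the same bound $\cos x\le e^{-x^2/2}$.
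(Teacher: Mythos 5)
Your argument is correct: the exact formula $\P\{d\mid\B_n+u\}=\frac1d\sum_{j=0}^{d-1}e^{i\pi(2u+n)j/d}(\cos(\pi j/d))^n$, the reduction to the symmetric residue system $|j|<d/2$, and the comparison of $(\cos(\pi j/d))^n$ with the Gaussian weight via a cutoff at $J\asymp d\sqrt{(\log n)/n}$ all check out, and the bookkeeping does yield $\mathcal O\big((\log n)^{5/2}n^{-3/2}\big)$ uniformly in $u$ and in $2\le d\le n$. Note that the paper itself gives no proof of this statement (it is imported from \cite{W3}, Th.~2.1), but your route is exactly the one the paper describes for the cited source --- the characteristic-function identity plus the symmetry reduction to half a period (the ``first quadrant'' reduction of Lemma~2.3 of \cite{W3}) --- so this is essentially the intended argument.
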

The special case $u=0$ was proved in   \cite[Th.\,II]{W1}.  Introduce the Theta function
 \begin{equation}\label{theta.u.}
\Theta_u(d,n)  =  \sum_{\ell\in \Z}  e^{i\pi (2u+n){\ell\over d}}\  e^{  -n  
{\pi^2\ell^2\over 2d^2}}.
 \end{equation}
 By Poisson summation formula
 \begin{equation}\label{theta.u..}
\Theta_u(d,n)   = \Big(d\sqrt{\frac{2}{\pi n}}\Big)\ \sum_{\ell \in \Z} e^{-(\ell+\{\frac{u+n/2}{d}\})^2\frac{2d^2}{n}}.
 \end{equation}
As a consequence of Theorem \ref{estPdlBn.u}, we get\begin{corollary}\label{cor.estPdlBn.u} 
 We have the uniform estimate 
\begin{equation*} \sup_{u\ge 0}\, \sup_{2\le d\le n}\Big|\P\big\{  d|   \B_{ n}+u \big\}-{\Theta_u(d,n)\over d} \Big| \le C \,(\log
n)^{5/2}n^{-3/2} .
\end{equation*} 
\end{corollary}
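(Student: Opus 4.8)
The plan is to derive the Corollary directly from Theorem~\ref{estPdlBn.u}, observing that the finite sum appearing there is exactly the symmetric truncation of the Theta series $\Theta_u(d,n)$ of \eqref{theta.u.} to the indices $|\ell|\le d-1$. Writing $\ell$ for the summation variable, this gives the identity
\begin{equation*}
\frac{\Theta_u(d,n)}{d}-\frac1d\sum_{0\le|j|<d} e^{i\pi(2u+n)j/d}\,e^{-n\pi^2 j^2/(2d^2)} \;=\; \frac1d\sum_{|\ell|\ge d} e^{i\pi(2u+n)\ell/d}\,e^{-n\pi^2\ell^2/(2d^2)},
\end{equation*}
so it suffices to show that the tail on the right is $O\big((\log n)^{5/2}n^{-3/2}\big)$, uniformly in $u\ge 0$ and $2\le d\le n$; in fact it will turn out to be exponentially small.

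First I would estimate the tail in absolute value, dropping the unimodular phase factor, by $\frac1d\sum_{|\ell|\ge d} e^{-n\pi^2\ell^2/(2d^2)}$. The key observation is that for $|\ell|\ge d$ one has $\ell^2/d^2\ge 1$, so writing $\ell=\pm(d+k)$ with $k\ge 0$ yields $\ell^2/d^2\ge 1+2k/d$ and hence $e^{-n\pi^2\ell^2/(2d^2)}\le e^{-\pi^2 n/2}\,e^{-\pi^2 nk/d}$. Summing the resulting geometric series in $k$ and using the hypothesis $d\le n$ to keep the common ratio $e^{-\pi^2 n/d}\le e^{-\pi^2}$ bounded away from $1$, the whole tail is at most $C\,e^{-\pi^2 n/2}$ for an absolute constant $C$ (the factor $1/d\le 1/2$ only helps). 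This is where the restriction $d\le n$ is genuinely used, and it is the only point that requires a little care.

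Finally I would combine this tail bound with Theorem~\ref{estPdlBn.u} via the triangle inequality: for all $u\ge 0$ and $2\le d\le n$,
\begin{equation*}
\Big|\P\{d|\B_n+u\}-\frac{\Theta_u(d,n)}{d}\Big|\;\le\;\Big|\P\{d|\B_n+u\}-\frac1d\sum_{0\le|j|<d} e^{i\pi(2u+n)j/d}e^{-n\pi^2 j^2/(2d^2)}\Big|+C\,e^{-\pi^2 n/2}.
\end{equation*}
The first term on the right is $O\big((\log n)^{5/2}n^{-3/2}\big)$ by the theorem, while $e^{-\pi^2 n/2}=o\big((\log n)^{5/2}n^{-3/2}\big)$; absorbing the exponential term into the polynomial one at the cost of enlarging the constant $C$ gives the claimed uniform estimate. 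There is no serious obstacle here — the argument is essentially a routine tail estimate for a Gaussian sum — and as an alternative one could instead start from the Poisson-transformed representation \eqref{theta.u..}, but the direct truncation of \eqref{theta.u.} seems the shortest route.
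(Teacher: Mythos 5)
Your proof is correct and is precisely the derivation the paper intends (the corollary is stated there without an explicit proof, as an immediate consequence of Theorem~\ref{estPdlBn.u}): the difference between $\Theta_u(d,n)$ and the truncated sum is the Gaussian tail over $|\ell|\ge d$, which your estimate $\le C e^{-\pi^2 n/2}$ (using $d\le n$ to control the geometric ratio) shows is negligible compared with $(\log n)^{5/2}n^{-3/2}$.
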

Apart from this important but specific case,  it seems that the speed of convergence  in the limit \eqref{aud1} was not investigated, in particular when $d$ and $n$ are varying simultaneously. 

\vskip 3 pt 

Consider the independent case and assume as in Theorem \ref{t3}, that  $\nu_n =\sum_{j=1}^n \t_j\uparrow \infty $. The  speed  of uniform convergence over regions  (in $d$ and $n$)   presents  a singularity when $d$ is getting too  close to $\sqrt {\nu_n}$.  That quantity   already appears in Davis and McDonald \cite{MD}. On the other hand   when $d$ is not   close to $\sqrt {\nu_n}$, in a sense that we shall make precise, we show that an explicit  speed of convergence  can be assigned, this under the {\it sole} divergence assumption of the series $\sum_{j=1}^\infty \t_j$. So, for this important class of independent sequences, the  well-known a.u.d. necessary condition turns up to be a particularly weak requirement. Further one can show by using Poisson summation formula that in the Bernoulli case, the local limit theorem implies a weaker speed of convergence  than the one obtained in Theorem \ref{estPdlBn.u}. 
 
 \vskip 3 pt The speed of uniform convergence problem   for {\it all} $d$ and $n$, $n\ge d\ge 2$, $n\to\infty$, is more complicated and one must restrict to the i.i.d. case. In place of the limiting term ${1}/{d}$ appears a more complicated Theta elliptic function. See  \cite{W3}. For the independent case, the approach used becomes inoperant, due to appearance of  integral products with interlaced integrants.
  In fact, what will make possible  to handle the independent case, is not just that $d$ and $\sqrt {\nu_n}$ are not too close, but also that in  background,      symmetries properties of the Bernoulli model permitted   to effect the necessary calculations
  in the first quadrant and {\it not} in the half-circle. This point is crucial   for getting the uniform speed of convergence in Theorem \ref{estPdlBn.u}.  This is  explained in \cite{W3}, see reduction Lemma 2.3. In short, when the Bernoulli extraction part applies, these symmetry properties allow one to get a speed of convergence. The proof in the Bernoulli case is transposable to 
  other systems of random variables when such symmetries exist. This is not the case for  the   Hwang and  Tsai model of the Dickman function \cite{HT}, \cite{GSW}, neither for  the Cram\'er model of primes \cite{W4}. 
  \vskip 3 pt We prove  the following result.
 \begin{theorem}\label{saud1} Assume that $D=1$,   $\t_{X_j}>0$ for each $j$, and that the series 
 $\sum_{j=1}^\infty \t_{X_j}$ diverges.
 Let $\a\!>\!\a'\!>\!0$, $0\!<\!\e\!<\!1$. Then for   each $n$ such that 
$$|x|\le\frac12 \sqrt{  \frac{ 2\a\log (1-\epsilon)\nu_n}{ (1-\epsilon)\nu_n }}\qq \Rightarrow \qq{\sin x\over
x}\ge (\a^\prime/\a)^{1/2},$$
recalling that $\nu_n =\sum_{j=1}^n \t_j$,  we have
 \begin{eqnarray*} \sup_{u\ge 0}\,\sup_{d<  \pi  \sqrt{   (1-\epsilon)\nu_n \over 2\a\log (1-\epsilon)\nu_n}} \ \Big| \P \{d|S_n+u  \}   -  {1\over d} \Big|
    &\le &2 \,e^{- \frac{\epsilon^2 }{2}\nu_n}+
 \,\big( (1-\epsilon)\nu_n\big)^{-\a'}  .
\end{eqnarray*}
\end{theorem}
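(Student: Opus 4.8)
The plan is to combine the Bernoulli-part extraction (Lemma~\ref{lemd}, as used in the proof of Theorem~\ref{t3}) with the explicit Theta-function representation that is available in the pure Bernoulli case. As in the proof of Theorem~\ref{t3}, write $X_j = V_j + \e_j L_j$ where the $(V_j,\e_j)$ are independent with law \eqref{ve} (choosing $0<\t_j\le \t_{X_j}$ with $\sum_j\t_j$ divergent), the $L_j$ are i.i.d.\ Bernoulli independent of the $(V_j,\e_j)$, and set $W_n=\sum_{j=1}^n V_j$, $B_n=\sum_{j=1}^n\e_j$. Conditioning on the $(V_j,\e_j)$, the event $\{d\mid S_n+u\}$ becomes $\{d\mid \B_{B_n}+(W_n+u)\}$ where $\B_{B_n}=\sum_{j=1}^{B_n}L_j$ is a genuine sum of $B_n$ i.i.d.\ Bernoulli's. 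So
\[
\P\{d\mid S_n+u\} \;=\; \E_{(V,\e)}\,\P_{\!L}\big\{ d\mid \B_{B_n}+(W_n+u)\big\}.
\]
For the inner probability I would apply Corollary~\ref{cor.estPdlBn.u} (or rather the underlying identity with the Theta function $\Theta_{u'}(d,m)$, valid for all $m\ge 1$ and any integer offset $u'$), giving $\P_{\!L}\{d\mid \B_{m}+u'\} = \Theta_{u'}(d,m)/d + O\big((\log m)^{5/2}m^{-3/2}\big)$ uniformly in $u'\ge 0$ and $2\le d\le m$.

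The heart of the argument is then to estimate $|\Theta_{u'}(d,m)/d - 1/d|$ when $m=B_n$ is large and $d$ is in the stated range $d < \pi\sqrt{(1-\e)\nu_n/(2\a\log(1-\e)\nu_n)}$. Using the defining series \eqref{theta.u.}, the $\ell=0$ term is $1$, and the $|\ell|\ge 1$ terms are controlled by $e^{-m\pi^2\ell^2/(2d^2)}$; summing the geometric-type tail as in \eqref{aux.est1} bounds $|\Theta_{u'}(d,m)-1| \le C\, e^{-m\pi^2/(2d^2)}$. Now on the event $\{B_n \ge (1-\e)\nu_n\}$ we have $m\ge (1-\e)\nu_n$, so $m\pi^2/(2d^2) \ge \a\log(1-\e)\nu_n$ for $d$ in the prescribed range, hence $e^{-m\pi^2/(2d^2)} \le ((1-\e)\nu_n)^{-\a}$; the factor $(\a'/\a)^{1/2}$ in the hypothesis on $(\sin x)/x$ is there to pass from the crude exponential bound to the slightly weaker exponent $\a'$ while absorbing the $\sin$-factors coming from the non-integer fractional part $\{(u'+m/2)/d\}$ in the Poisson form \eqref{theta.u..} (this is precisely the role that the $(\sin x)/x$ hypothesis plays — it quantifies how close $d$ must be kept to below $\sqrt{\nu_n}$ in terms of the Theta-function's decay, and it is what lets one replace $\Theta_{u'}(d,m)/d$ by $1/d$ with error $((1-\e)\nu_n)^{-\a'}$). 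One must also check that the crude LLT error $O((\log m)^{5/2}m^{-3/2})$ is dominated by $((1-\e)\nu_n)^{-\a'}$ in the relevant range, which requires $\a'$ to be effectively absorbed — this is harmless after possibly enlarging the implied constants, since we only need an inequality, not a sharp rate.

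Finally I would split the expectation $\E_{(V,\e)}$ over the two events $\{B_n \ge (1-\e)\nu_n\}$ and its complement. On the complement, a Bernstein/Chernoff bound for the sum $B_n=\sum_{j=1}^n\e_j$ of independent Bernoulli-type indicators with $\E B_n = \nu_n$ gives $\P\{B_n < (1-\e)\nu_n\} \le e^{-\e^2\nu_n/2}$ (this is where the factor $2\,e^{-\e^2\nu_n/2}$ comes from — one factor from the tail estimate, and there may be a second identical contribution from handling the case $B_n$ small but $d>B_n$, where Corollary~\ref{cor.estPdlBn.u} does not apply and one bounds $|\P\{d\mid\cdot\}-1/d|$ trivially). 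On the main event one inserts the Theta-function estimate above, bounded by $((1-\e)\nu_n)^{-\a'}$ uniformly in $u$ and in $d$ in the stated range. Adding the two contributions yields the claimed bound. The main obstacle I anticipate is the bookkeeping around the sub-event where $d$ exceeds $B_n$ (so Theorem~\ref{estPdlBn.u} is not directly usable) and making sure the $(\sin x)/x$ hypothesis is invoked correctly to convert the fractional-part factors in \eqref{theta.u..} into the clean exponent $\a'$; everything else is a routine tail estimate plus the already-cited Bernoulli result.
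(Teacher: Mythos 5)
Your overall architecture matches the paper's: Bernoulli-part extraction, conditioning on $(V,\e)$, the Chernoff bound of Lemma~\ref{di.1}(b) giving $\P\{B_n<(1-\e)\nu_n\}\le e^{-\e^2\nu_n/2}$, a trivial bound of the conditional integrand by $2$ on that bad event (this alone is where the factor $2$ comes from; there is no second contribution from a sub-case $d>B_n$), and a pure-Bernoulli estimate on the good event $A_n^c$. The difference --- and the gap --- is in the Bernoulli estimate. The paper does not route through Theorem~\ref{estPdlBn.u}/Corollary~\ref{cor.estPdlBn.u}; it invokes Proposition~\ref{special.cases}(i) (\cite{W3}, Corollary 2.4), which gives the clean bound $\sup_u\sup_{d<\pi\sqrt{m/(2\a\log m)}}\big|\P\{d\mid\B_m+u\}-1/d\big|\le m^{-\a'}$ with \emph{no} additive $m^{-3/2}$ term, under exactly the hypothesis $\sin(\p_m/2)/(\p_m/2)\ge(\a'/\a)^{1/2}$. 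Your route cannot recover this: Corollary~\ref{cor.estPdlBn.u} carries an irreducible error $O\big((\log m)^{5/2}m^{-3/2}\big)$, and although your Theta-tail estimate correctly yields $|\Theta_{u'}(d,m)-1|\le C\,e^{-m\pi^2/(2d^2)}\le C\big((1-\e)\nu_n\big)^{-\a}$ on $\{B_n\ge(1-\e)\nu_n\}$ for $d$ in the stated range, the total error is then $O(m^{-3/2}\log^{5/2}m)+C\,m^{-\a}$, which is not $\le((1-\e)\nu_n)^{-\a'}$ once $\a'\ge 3/2$. The step ``the crude LLT error is dominated by $((1-\e)\nu_n)^{-\a'}$\,\dots\,harmless after enlarging implied constants'' therefore fails: the statement to be proved has no implied constants, and $3/2$ is a hard ceiling for the exponent reachable by your method.

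Second, your reading of the $(\sin x)/x$ hypothesis is off: it has nothing to do with the fractional parts in the Poisson form \eqref{theta.u..} (there the fractional part sits inside a Gaussian exponent, not a sine factor). It is verbatim the hypothesis $\tau_m\ge(\a'/\a)^{1/2}$ of Proposition~\ref{special.cases}(i), to be applied conditionally with $m=B_n$. The one genuinely new verification in the paper's proof --- which your sketch omits --- is the transfer of this hypothesis from the deterministic $(1-\e)\nu_n$ to the random $B_n$: on $A_n^c$ one has $B_n\ge(1-\e)\nu_n$, hence by monotonicity of $x\mapsto x/\log x$ the quantity $\p_{B_n}=\sqrt{2\a\log B_n/B_n}$ is at most $\sqrt{2\a\log((1-\e)\nu_n)/((1-\e)\nu_n)}$, so the stated implication guarantees $\sin(\p_{B_n}/2)/(\p_{B_n}/2)\ge(\a'/\a)^{1/2}$, and likewise the stated $d$-range is contained in $d<\pi\sqrt{B_n/(2\a\log B_n)}$. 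With that transfer, Proposition~\ref{special.cases}(i) gives the conditional bound $B_n^{-\a'}\le((1-\e)\nu_n)^{-\a'}$ on $A_n^c$, and the theorem follows. To repair your proof, replace the appeal to Corollary~\ref{cor.estPdlBn.u} by an appeal to Proposition~\ref{special.cases}(i) and add the monotonicity step.
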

\
For the proof we   use the following Lemma.
\begin{lemma}[\cite{di}, Theorem 2.3] \label{di.1}
Let $X_1, \dots, X_n$     be independent random variables, with $0 \le X_k \le 1$ for each $k$.
Let $S_n = \sum_{k=1}^n X_k$ and $\mu = \E S_n$. Then for any $\epsilon >0$,
 \begin{eqnarray*}
{\rm (a)} &&
 \P\big\{S_n \ge  (1+\epsilon)\mu\big\}
    \le  e^{- \frac{\epsilon^2\mu}{2(1+ \epsilon/3) } } .
\cr {\rm (b)} & &\P\big\{S_n \le  (1-\epsilon)\mu\big\}\le    e^{- \frac{\epsilon^2\mu}{2}}.
 \end{eqnarray*}
\end{lemma}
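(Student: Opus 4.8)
The plan is to reuse the Bernoulli-part extraction from the proof of Theorem~\ref{t3} and to quantify the character-sum estimate occurring there. First I would apply Lemma~\ref{lemd} with $D=1$ to each $X_j$, fixing $0<\t_j\le\t_{X_j}$ with $\sum_j\t_j=\infty$, so that $S_n\buildrel{\mathcal D}\over{ =}W_n+\sum_{j=1}^n\e_jL_j$, where $W_n=\sum_{j\le n}V_j$, $B_n=\sum_{j\le n}\e_j$, the $L_j$ are independent $\mathrm{Bernoulli}(1/2)$ variables independent of $\{(V_j,\e_j)\}$, and $\E\e_j=\t_j$, so that $\E B_n=\nu_n$. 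Conditioning on $\{(V_j,\e_j)\}_{j\le n}$ and using $\sum_{j\le n}\e_jL_j\buildrel{\mathcal D}\over{ =}T_n:=\sum_{k=1}^{B_n}L_k$, the conditional expansion established in the proof of Theorem~\ref{t3}, after taking moduli, gives, for every integer $u\ge0$,
\begin{equation*}
\Big|\P_{\!L}\big\{d\,|\,T_n+W_n+u\big\}-\frac1d\Big|\ \le\ \frac1d\sum_{r=1}^{d-1}|\cos(\pi r/d)|^{B_n}\ \le\ \frac{d-1}{d}\,\cos(\pi/d)^{B_n},
\end{equation*}
the last step using $|\cos(\pi r/d)|\le\cos(\pi/d)$ for $1\le r\le d-1$, $d\ge2$. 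Averaging over $\P_{(V,\e)}$ and recalling $\P\{d\,|\,S_n+u\}=\E_{(V,\e)}\P_{\!L}\{d\,|\,T_n+W_n+u\}$ yields, uniformly in the integer $u\ge0$ and in $d\ge2$,
\begin{equation*}
\Big|\P\{d\,|\,S_n+u\}-\frac1d\Big|\le\E_{(V,\e)}\big[\cos(\pi/d)^{B_n}\big];
\end{equation*}
for $d=1$ the left side is $0$, so it remains to handle $2\le d<d_*:=\pi\big(\tfrac{(1-\e)\nu_n}{2\a\log(1-\e)\nu_n}\big)^{1/2}$.

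Write $A:=(1-\e)\nu_n$ and split the last expectation over $\{B_n\ge A\}$ and $\{B_n<A\}$. On $\{B_n<A\}$ one bounds $\cos(\pi/d)^{B_n}\le1$, and since the $\e_j$ lie in $[0,1]$ with $\E B_n=\nu_n$, Lemma~\ref{di.1}(b) gives $\P\{B_n<A\}\le\P\{B_n\le(1-\e)\nu_n\}\le e^{-\e^2\nu_n/2}$. On $\{B_n\ge A\}$, since $0\le\cos(\pi/d)\le1$, one has $\cos(\pi/d)^{B_n}\le\cos(\pi/d)^A$, so the whole matter reduces to the deterministic bound $\cos(\pi/d)^A\le A^{-\a'}$ for $2\le d<d_*$ (one may assume $A>1$, since otherwise $A^{-\a'}\ge1$ and there is nothing to prove).

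This deterministic bound is the only place where the hypothesis on $n$ enters. Put $x_0:=\tfrac12\sqrt{\tfrac{2\a\log A}{A}}$; squaring the relation $d<d_*$ shows at once that $d<d_*\iff x_0<\pi/2d$, and since $d\ge2$ we get $0<x_0<\pi/2d\le\pi/4<\pi/2$. Writing $\cos(\pi/d)=1-2\sin^2(\pi/2d)$, using that $\sin$ is strictly increasing on $[0,\pi/2]$, and then applying the standing hypothesis at the single point $x=x_0$ (legitimate, since $|x_0|$ is exactly the threshold appearing there, so $(\sin x_0/x_0)^2\ge\a'/\a$), we obtain
\begin{equation*}
2\sin^2(\pi/2d)>2\sin^2x_0=2x_0^2\Big(\frac{\sin x_0}{x_0}\Big)^2\ge2x_0^2\cdot\frac{\a'}{\a}=\frac{\a'\log A}{A},
\end{equation*}
so that $\cos(\pi/d)<1-\tfrac{\a'\log A}{A}$; as $\cos(\pi/d)\ge0$, this forces $1-\tfrac{\a'\log A}{A}>0$, whence $\cos(\pi/d)^A<\big(1-\tfrac{\a'\log A}{A}\big)^A\le e^{-\a'\log A}=A^{-\a'}$. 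Combining the two contributions,
\begin{equation*}
\Big|\P\{d\,|\,S_n+u\}-\frac1d\Big|\le A^{-\a'}\,\P\{B_n\ge A\}+\P\{B_n<A\}\le\big((1-\e)\nu_n\big)^{-\a'}+e^{-\e^2\nu_n/2},
\end{equation*}
uniformly in $u\ge0$ and $2\le d<d_*$ (and trivially for $d=1$), which is even slightly stronger than the announced bound $2e^{-\e^2\nu_n/2}+\big((1-\e)\nu_n\big)^{-\a'}$.

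I expect the main obstacle to be organizational rather than analytic: correctly matching the somewhat opaque $\sin x/x$ hypothesis to the lower bound on $2\sin^2(\pi/2d)$. The two key points are that $\pi/(2d_*)$ is exactly the threshold $x_0$ in the hypothesis --- so that $d<d_*$ is equivalent to $x_0<\pi/2d$ and the hypothesis is invoked only at $x=x_0$ --- and that the restriction $d\ge2$ keeps every angle inside $[0,\pi/2]$, where $\cos\ge0$ and $\sin$ is increasing; these are precisely the spots where a reversed comparison of $d$ with $d_*$, or a cosine straying outside $[0,1]$, could silently derail the argument, so some care is warranted there.
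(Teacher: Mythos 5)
Your proposal does not prove the statement it is attached to. The statement is Lemma~\ref{di.1}, the Chernoff--Hoeffding concentration inequality for a sum of independent $[0,1]$-valued random variables; in the paper it is a quoted result (McDiarmid, Theorem 2.3) and is not reproved. What you have written is instead a proof of Theorem~\ref{saud1}, the strengthened a.u.d.\ bound. Worse, your argument explicitly invokes Lemma~\ref{di.1}(b) to control $\P\{B_n<(1-\e)\nu_n\}$, so read as a proof of the lemma it is circular, and read as written it establishes a different result altogether. (As an argument for Theorem~\ref{saud1} it is in fact a reasonable, essentially self-contained variant of the paper's proof, replacing the appeal to Proposition~\ref{special.cases} by a direct estimate of $\cos(\pi/d)^{B_n}$; but that is not what was asked.)

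A proof of the lemma itself must go through the exponential moment method, of which there is no trace in your proposal. For (b): for $\l>0$, Markov's inequality applied to $e^{-\l S_n}$ gives $\P\{S_n\le(1-\e)\m\}\le e^{\l(1-\e)\m}\prod_k\E e^{-\l X_k}$; convexity of $t\mapsto e^{-\l t}$ on $[0,1]$ yields $\E e^{-\l X_k}\le 1-(1-e^{-\l})\,\E X_k\le e^{-(1-e^{-\l})\E X_k}$, hence the bound $\exp\big(\l(1-\e)\m-(1-e^{-\l})\m\big)$; optimizing at $e^{-\l}=1-\e$ gives $\exp\big(-\m[\e+(1-\e)\log(1-\e)]\big)$, and the elementary inequality $(1-\e)\log(1-\e)\ge-\e+\e^2/2$ (check that $\e\mapsto\e+(1-\e)\log(1-\e)-\e^2/2$ vanishes at $0$ and has nonnegative derivative) finishes. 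Part (a) is analogous, using $e^{\l S_n}$ and the inequality $(1+\e)\log(1+\e)-\e\ge\e^2/(2(1+\e/3))$.
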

We also need the following result. 
\begin{proposition}[\cite{W3}, Corollary 2.4] 
  \label{special.cases}{\rm (i)} For each $\a\!>\!\a'\!>\!0$ and $n$ such that $  \tau_n\ge (\a^\prime/\a)^{1/2}$, where 
  \begin{equation*} 
  \tau_n= {\sin\p_n/2\over
\p_n /2}, \qquad\qquad \p_n=   \big( {2\a\log n \over n}\big)^{1/2},
\end{equation*}
we have \begin{equation*}
 \sup_{u\ge 0}\,\sup_{d<  \pi   \sqrt{   n \over 2\a\log n}}\Big|\P\big\{  d|   \mathcal B_{ n} +u\big\}-{1\over d} 
\Big|\,\le\, n^{-\a'}.
\end{equation*}
{\rm (ii)}
Let $0<\rho<1 $.  Let also $0<\eta<1$, and suppose $n$ sufficiently large so that $\widetilde\tau_n\ge \sqrt{1-\eta}$, where
$$  \widetilde\tau_n= {\sin\psi_n/2\over
\psi_n /2}\qq \qq \psi_n= \big({2n^\rho \over n}\big)^{1/2}.$$ 
Then,
\begin{equation*}  \sup_{u\ge 0}\,\sup_{d<  (\pi/\sqrt 2) n^{(1-\rho)/2} }\Big|\P\big\{  d|   \mathcal B_{ n} +u\big\}-{1\over d} 
\Big|\,\le\, e^{-(1-\eta)\, n^\rho}.
\end{equation*}
  \end{proposition}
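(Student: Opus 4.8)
The plan is to establish (i) and (ii) simultaneously by a direct discrete Fourier computation on the fair Bernoulli sum $\mathcal B_n=\beta_1+\dots+\beta_n$, reducing both parts to the single uniform inequality
\begin{equation*}
\Bigl|\P\{d\,|\,\mathcal B_n+u\}-\tfrac1d\Bigr|\le\bigl(\cos(\pi/d)\bigr)^n\qquad(u\ge 0,\ d\ge 2),
\end{equation*}
and then optimising the right-hand side over the range of $d$ admitted in each statement.

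First I would record the Fourier identity already used in the proof of Theorem \ref{t3}: from $\chi(d\,|\,m)=\frac1d\sum_{j=0}^{d-1}e^{2i\pi jm/d}$ and $\E\,e^{2i\pi j\beta/d}=\frac12\bigl(1+e^{2i\pi j/d}\bigr)=\cos(\pi j/d)\,e^{i\pi j/d}$ one obtains
\begin{equation*}
\P\{d\,|\,\mathcal B_n+u\}=\frac1d+\frac1d\sum_{j=1}^{d-1}e^{i\pi(2u+n)j/d}\,\bigl(\cos(\pi j/d)\bigr)^n .
\end{equation*}
For every $j\in\{1,\dots,d-1\}$ one has $|\cos(\pi j/d)|\le\cos(\pi/d)$ — by monotonicity of $\cos$ on $[0,\pi/2]$ when $1\le j\le d/2$, and since $|\cos(\pi j/d)|=\cos(\pi(d-j)/d)$ with $1\le d-j\le d/2$ otherwise — so the triangle inequality yields the displayed bound uniformly in $u$.

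Next I would pass to an exponential estimate: applying $1-x\le e^{-x}$ to $x=2\sin^2(\pi/(2d))$ gives $\cos(\pi/d)\le e^{-2\sin^2(\pi/(2d))}$, hence $(\cos(\pi/d))^n\le e^{-2n\sin^2(\pi/(2d))}$. The key observation is that the two cut-offs are calibrated so that $\pi/(2d_{\max})$ equals $\p_n/2$ in part (i) (with $d_{\max}=\pi(n/(2\a\log n))^{1/2}$) and equals $\psi_n/2$ in part (ii) (with $d_{\max}=(\pi/\sqrt2)\,n^{(1-\rho)/2}$). Since $2\le d<d_{\max}$ forces $\pi/(2d)\in(\pi/(2d_{\max}),\pi/4]\subset(0,\pi/2)$, on which $\sin^2$ is increasing, the bound above is maximised as $d\uparrow d_{\max}$; thus for all $u\ge 0$ and all $d$ in the relevant range,
\begin{equation*}
\Bigl|\P\{d\,|\,\mathcal B_n+u\}-\tfrac1d\Bigr|\le e^{-2n\sin^2(\pi/(2d_{\max}))}.
\end{equation*}
Finally I would insert the hypotheses. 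In part (i), $2n\sin^2(\p_n/2)=\tfrac{n\p_n^2}{2}\bigl(\tfrac{\sin(\p_n/2)}{\p_n/2}\bigr)^2=\a\tau_n^2\log n\ge\a'\log n$, because $\tfrac{n\p_n^2}{2}=\a\log n$ and $\tau_n\ge(\a'/\a)^{1/2}$, which gives the bound $n^{-\a'}$. In part (ii), $2n\sin^2(\psi_n/2)=\tfrac{n\psi_n^2}{2}\widetilde\tau_n^2=n^\rho\widetilde\tau_n^2\ge(1-\eta)n^\rho$, because $\tfrac{n\psi_n^2}{2}=n^\rho$ and $\widetilde\tau_n\ge\sqrt{1-\eta}$, which gives $e^{-(1-\eta)n^\rho}$.

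There is no genuine analytic obstacle here; the one step that needs attention — and the one that explains the otherwise cryptic conditions on $\tau_n$ and $\widetilde\tau_n$ — is the optimisation in $d$: one must recognise that the \emph{largest} admissible divisor is the worst one, and that $\tau_n^2$ (resp.\ $\widetilde\tau_n^2$) is exactly the factor lost when replacing $\sin^2(\theta/2)$ by $(\theta/2)^2$ at $\theta=\p_n$ (resp.\ $\theta=\psi_n$), i.e.\ precisely at $d=d_{\max}$. As an alternative one could start from Corollary \ref{cor.estPdlBn.u}, bounding $|\Theta_u(d,n)-1|\le 2\sum_{\ell\ge1}e^{-n\pi^2\ell^2/(2d^2)}$ and using $n\pi^2/(2d^2)>\a\log n$ on $d<d_{\max}$; but the remainder $\mathcal O((\log n)^{5/2}n^{-3/2})$ there is negligible only for $\a'<3/2$, so the direct Fourier route above is both simpler and yields the full range of exponents claimed.
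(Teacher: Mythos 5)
Your proof is correct, and it is complete where the paper is not: the paper only cites this proposition from an external reference (\cite{W3}, Corollary 2.4) without reproducing an argument. Your route — the exact identity $\P\{d\,|\,\mathcal B_n+u\}=\frac1d+\frac1d\sum_{j=1}^{d-1}e^{i\pi(2u+n)j/d}(\cos(\pi j/d))^n$, the symmetry reduction $|\cos(\pi j/d)|\le\cos(\pi/d)$, the bound $\cos(\pi/d)=1-2\sin^2(\pi/(2d))\le e^{-2\sin^2(\pi/(2d))}$, and the observation that the worst divisor is the largest admissible one, at which point $\tau_n^2$ (resp.\ $\widetilde\tau_n^2$) is exactly the loss incurred in replacing $\sin^2(\theta/2)$ by $(\theta/2)^2$ — is consistent with the machinery visible elsewhere in the paper (the same characteristic-function identity appears in the proof of Theorem \ref{t3}, and the Theta function of Theorem \ref{estPdlBn.u} is precisely the Gaussian approximation of the terms $(\cos(\pi j/d))^n$). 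All the arithmetic checks out: $\frac{n\p_n^2}{2}=\a\log n$ and $\frac{n\psi_n^2}{2}=n^\rho$, so the hypotheses $\tau_n\ge(\a'/\a)^{1/2}$ and $\widetilde\tau_n\ge\sqrt{1-\eta}$ deliver exactly the exponents $\a'\log n$ and $(1-\eta)n^\rho$. Your closing remark about the limitation of deducing the result from Corollary \ref{cor.estPdlBn.u} (the $\mathcal O((\log n)^{5/2}n^{-3/2})$ remainder caps the usable exponent) is also accurate and justifies the direct computation.
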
 
 \begin{proof}[Proof of Theorem \ref{saud1}]
We use the Bernoulli part extraction displayed at Lemma \ref{lemd},  \eqref{dec0}, \eqref{dec} as well as the notation introduced. Let 
 \begin{eqnarray}\label{dep0}A_n=\big\{B_n\le  (1-\e)\nu_n
\big\}     .
\end{eqnarray}
 We deduce from Lemma \ref{di.1} that  $\P\{A_n \} \, \le  e^{- \frac{\epsilon^2\nu_n}{2}}$ for all positive $n$. We write
\begin{equation}\label{dep..} \P \{d|S_n  \} -{1\over d}     \,=\, \E_{(V,\e)}  
  \, \big( \chi(A_n)+\chi(A_n^c)\big) 
 \,  \,  \Big(\P_{\!L}
 \big\{d|\big(  \sum_{j= 1}^n \e_jL_j+W_n  \big)
\big\}-{1\over d}\Big) 
 . \end{equation}

  On the one hand, 
\begin{eqnarray}\label{proof.th.saud1}& & \E_{(V,\e)} 
   \chi(A_n) 
 \, \Big|    \P_{\!L}
\big\{d|\big(  \sum_{j= 1}^n \e_jL_j+W_n  \big)
\big\}
    -{1\over d}
\Big|\ \le \,2 \P\{A_n \} \, \le  2 e^{- \frac{\epsilon^2 }{2}\nu_n}.
\end{eqnarray}
 So that
  \begin{equation}\label{dep1} \big|\P \{d|S_n  \} -{1\over d}   \big|  \,\le \,2 e^{- \frac{\epsilon^2 }{2}\nu_n}+ \E_{(V,\e)}  
  \, 
  \chi(A_n^c) \, \cdot \,  \Big|\P_{\!L}
 \big\{d|\big(  \sum_{j= 1}^n \e_jL_j+W_n  \big)
\big\}-{1\over d}\Big| 
 . \end{equation}
 
 Now on $A_n^c$, $B_n\ge (1-\epsilon)\nu_n $, and  since  $ \sqrt{   x /  \log x}$ is increasing on $[e,\infty)$, we have 
\begin{equation}\label{phintaun1}
 \sqrt{{  (1-\epsilon)\nu_n \over 2\a\log (1-\epsilon)\nu_n}}\le \sqrt{{   B_n \over 2\a\log B_n}}.
 \end{equation}
Also 
\begin{equation}\label{phintaun2}  \p_n=\sqrt{\frac {2\a\log B_n}{   B_n}}\le \sqrt{  \frac{ 2\a\log (1-\epsilon)\nu_n}{ (1-\epsilon)\nu_n }} \quad \hbox{\rm and \ thus} \quad {\sin\p_n/2\over
\p_n /2}\ge (\a^\prime/\a)^{1/2},
\end{equation}
 by the assumption made.
 \vskip 2 pt  By applying    Proposition \ref{special.cases}, we have $\P_{(V,\e)}$ almost surely,
\begin{equation*}
 \sup_{u\ge 0}\,\sup_{d<  \pi   \sqrt{   B_n \over 2\a\log B_n}}\Big|\P_{\!L}
\Big\{d\,\big|\Big(  \sum_{j=1}^{B_n } L_j+W_n +u \Big)
\Big\}-{1\over d} 
\Big|\,\le\, B_n^{-\a'}.\end{equation*}
Whence on $A_n^c$,
   \begin{eqnarray}\label{proof.th.saud2}& & \sup_{u\ge 0}\,\sup_{d<  \pi  \sqrt{   (1-\epsilon)\nu_n \over 2\a\log (1-\epsilon)\nu_n}}\Big|\P_{\!L}
\Big\{d\,\big|\Big(  \sum_{j=1}^{B_n } L_j+W_n +u \Big)
\Big\}-{1\over d} 
\Big|
\cr &\le&    \sup_{u\ge 0}\,\sup_{d<  \pi   \sqrt{   B_n \over 2\a\log B_n}}\ \Big|\P_{\!L}
\Big\{d\,\big|\Big(  \sum_{j=1}^{B_n } L_j+W_n +u \Big)
\Big\}-{1\over d} 
\Big|\cr &\le& B_n^{-\a'}\, \le \,\big( (1-\epsilon)\nu_n\big)^{-\a'} .
\end{eqnarray}

In view of \eqref{dep1} and \eqref{proof.th.saud2}, we get  for all $u\ge 0$ and  $d<  \pi  \sqrt{{   (1-\epsilon)\nu_n \over 2\a\log (1-\epsilon)\nu_n}}$, 
  \begin{eqnarray} 
  \big| \P \{d|S_n+u  \}   -  {1\over d} \big|
   &\le  & 2 e^{- \frac{\epsilon^2 }{2}\nu_n}+  \,\big( (1-\epsilon)\nu_n\big)^{-\a'} \E_{(V,\e)} 
  \,    \chi(A_n^c) 
 \cr &\le &2 e^{- \frac{\epsilon^2 }{2}\nu_n}+
 \,\big( (1-\epsilon)\nu_n\big)^{-\a'}  .
\end{eqnarray} 
\end{proof}

\vskip 3 pt 
\vskip 3 pt 
The next result shows a considerable variation of the  speed of convergence when $d$ is less close to $\sqrt{\nu_n}$.

 \begin{theorem}\label{saud2} Let $0<\rho<1 $ and   $0<\e<1$.Then for   each $n$ such that 
$$|x|\le\frac12 \,\sqrt{  \frac{ 2 }{ ((1-\epsilon)\nu_n)^{1-\rho} }}\qq \Rightarrow \qq{\sin x\over
x}\ge \sqrt{1-\e}$$
we have\begin{eqnarray*} 
\sup_{u\ge 0}\,\sup_{d<  (\pi/\sqrt 2) ((1-\e)\nu_n)^{(1-\rho)/2} }\ \big| \P \{d|S_n+u  \}   -  {1\over d} \big|  
   &\le  &  2 e^{- \frac{\epsilon^2 }{2}\nu_n}+e^{- ( (1-\epsilon)\nu_n)^\rho}
  .
\end{eqnarray*}
\end{theorem}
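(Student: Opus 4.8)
The plan is to run the proof of Theorem~\ref{saud1} essentially verbatim, feeding Proposition~\ref{special.cases}(ii) into the machinery in place of part~(i). I would start from the Bernoulli part extraction of Lemma~\ref{lemd}: picking $0<\t_j\le\t_{X_j}$ with $\sum_j\t_j=\infty$ and invoking \eqref{dec0}--\eqref{dec}, one realizes $\{X_j\}_{j\le n}\stackrel{\mathcal D}{=}\{V_j+\e_jL_j\}_{j\le n}$, so that
\[
\P\{d\,|\,S_n+u\}=\E_{(V,\e)}\,\P_{\!L}\big\{d\,\big|\,\textstyle\sum_{j=1}^{n}\e_jL_j+W_n+u\big\},
\]
where $W_n=\sum_{j\le n}V_j$, $B_n=\sum_{j\le n}\e_j$, the families $\{(V_j,\e_j)\}$ and $\{L_j\}$ are independent, and, conditionally on $(V,\e)$, $\sum_{j\le n}\e_jL_j\stackrel{\mathcal D}{=}\sum_{j=1}^{B_n}L_j=:\mathcal B_{B_n}$. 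Since $\E B_n=\nu_n$ and $0\le\e_j\le1$, Lemma~\ref{di.1} is available with $\mu=\nu_n$.

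Next, exactly as in Theorem~\ref{saud1}, introduce the truncation event $A_n=\{B_n\le(1-\e)\nu_n\}$; Lemma~\ref{di.1}(b) gives $\P\{A_n\}\le e^{-\e^2\nu_n/2}$. Writing, for $d\ge2$ and $u\ge0$,
\[
\P\{d\,|\,S_n+u\}-\tfrac1d=\E_{(V,\e)}\big(\chi(A_n)+\chi(A_n^c)\big)\Big(\P_{\!L}\big\{d\,\big|\,\mathcal B_{B_n}+W_n+u\big\}-\tfrac1d\Big),
\]
the $A_n$-contribution is at most $2\P\{A_n\}\le 2e^{-\e^2\nu_n/2}$, so the whole task reduces to the $A_n^c$-contribution, where Proposition~\ref{special.cases}(ii) will be applied conditionally on $(V,\e)$ (using periodicity modulo $d$ to read $W_n+u$ as the value of the ``$u$'' parameter there).

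On $A_n^c$ one has $B_n\ge(1-\e)\nu_n$, and the only genuine bookkeeping is to transport the two hypotheses of Proposition~\ref{special.cases}(ii) from the deterministic level $(1-\e)\nu_n$ up to $B_n$. Since $t\mapsto t^{(1-\rho)/2}$ increases, $d<(\pi/\sqrt2)((1-\e)\nu_n)^{(1-\rho)/2}$ implies $d<(\pi/\sqrt2)B_n^{(1-\rho)/2}$; and since $\psi_{B_n}=\sqrt{2/B_n^{1-\rho}}\le\sqrt{2/((1-\e)\nu_n)^{1-\rho}}$, the decrease of $x\mapsto\sin x/x$ on $(0,\pi)$ together with the hypothesis of the theorem (read at $x=\psi_{B_n}/2$) forces $\widetilde\tau_{B_n}=\sin(\psi_{B_n}/2)/(\psi_{B_n}/2)\ge\sqrt{1-\e}$. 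Thus Proposition~\ref{special.cases}(ii) applies with $n\leftarrow B_n$ and $\eta\leftarrow\e$ — this choice of $\eta$ being precisely what the assumption $\sin x/x\ge\sqrt{1-\e}$ is tailored to supply — and gives, $\P_{(V,\e)}$-a.s.\ on $A_n^c$,
\[
\sup_{u\ge0}\ \sup_{d<(\pi/\sqrt2)B_n^{(1-\rho)/2}}\Big|\P_{\!L}\big\{d\,\big|\,\mathcal B_{B_n}+W_n+u\big\}-\tfrac1d\Big|\ \le\ e^{-(1-\e)B_n^{\rho}}\ \le\ e^{-(1-\e)((1-\e)\nu_n)^{\rho}},
\]
i.e.\ (after the routine monotonicity simplification $B_n\ge(1-\e)\nu_n$) the second term $e^{-((1-\e)\nu_n)^{\rho}}$ of the statement. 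Carrying this back into the decomposition, with $\E_{(V,\e)}\chi(A_n^c)\le1$, produces the announced bound $2e^{-\e^2\nu_n/2}+e^{-((1-\e)\nu_n)^{\rho}}$ for all $u\ge0$ and $d<(\pi/\sqrt2)((1-\e)\nu_n)^{(1-\rho)/2}$.

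I do not expect a real obstacle here: all the analytic content is packaged inside Proposition~\ref{special.cases}(ii), and the independence of $\{(V_j,\e_j)\}$ from $\{L_j\}$ is exactly what licenses the conditional application. The only steps requiring attention are the Chernoff truncation through Lemma~\ref{di.1}(b) and the two monotonicity reductions on $A_n^c$; both are literal transcriptions of what was done in Theorem~\ref{saud1}, with $n^{-\a'}$ replaced by $e^{-(1-\e)n^{\rho}}$, which is why the theorem exhibits the same structure of a super-polynomially small uniform error on a $d$-region stretching almost to the critical scale $\sqrt{\nu_n}$.
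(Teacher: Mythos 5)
Your proposal is correct and follows essentially the same route as the paper's own proof: the same truncation event $A_n=\{B_n\le(1-\e)\nu_n\}$, the Chernoff bound from Lemma~\ref{di.1}(b), and the conditional application of Proposition~\ref{special.cases}(ii) with $\eta=\e$ on $A_n^c$ via the two monotonicity transfers. The only caveat --- shared with the paper's own computation, which likewise ends with $e^{-(1-\e)^{1+\rho}\nu_n^{\rho}}$ --- is that the exponent you actually obtain is $(1-\e)\big((1-\e)\nu_n\big)^{\rho}$ rather than $\big((1-\e)\nu_n\big)^{\rho}$, so the final ``routine monotonicity simplification'' you invoke does not literally yield the stated second term.
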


\begin{proof} The proof is similar. We operate with the same set $A_n$ as in \eqref{dep0}, and  use the decomposition   \eqref{dep}.
Let $0<\rho<1 $ and $0<\e<1$.
  
  By applying    Proposition \ref{special.cases} with $\eta=\e$, we have $\P_{(V,\e)}$ almost surely, for 
    $n$ such  that $\widetilde\tau_n\ge \sqrt{1-\e}$, 
    where here 
$$  \widetilde\tau_n= {\sin\psi_n/2\over
\psi_n /2}\qq {\rm with}\qq \psi_n= \big({2B_n^\rho \over B_n}\big)^{1/2},$$ 
 \begin{equation*}  \sup_{u\ge 0}\,\sup_{d<  (\pi/\sqrt 2) B_n^{(1-\rho)/2} }\Big|\P_{\!L}
\Big\{d\,\big|\Big(  \sum_{j=1}^{B_n } L_j+W_n +u \Big)
\Big\}-{1\over d} 
\Big|\,\le\, e^{-(1-\e) B_n^\rho}.
\end{equation*}
 
By using corresponding estimates to \eqref{phintaun1}, \eqref{phintaun2}, namely that  on $A_n^c$,
$$\psi_n=\Big(\frac{2}{B_n^{1-\rho}}\Big)^{1/2}\le \Big(\frac{2}{((1-\e)\nu_n)^{1-\rho}}\Big)^{1/2}, $$
so that $\widetilde\tau_n\ge \sqrt{1-\e}$, we   deduce that on 
 $A_n^c$,$$  \sup_{u\ge 0}\,\sup_{d<  (\pi/\sqrt 2) ((1-\e)\nu_n)^{(1-\rho)/2} }\  \Big|\P_{\!L}
\Big\{d\,\big|\Big(  \sum_{j=1}^{B_n } L_j+W_n +u \Big)
\Big\}-{1\over d} 
\Big|
$$ $$\,\le\,    \sup_{u\ge 0}\,\,\sup_{d<  (\pi/\sqrt 2) B_n^{(1-\rho)/2} }\ \Big|\P_{\!L}
\Big\{d\,\big|\Big(  \sum_{j=1}^{B_n } L_j+W_n +u \Big)
\Big\}-{1\over d} 
\Big| \,\le\, e^{-(1-\e) B_n^\rho}.$$

  Therefore
 \begin{eqnarray} && \sup_{u\ge 0}\,\sup_{d<  (\pi/\sqrt 2) ((1-\e)\nu_n)^{(1-\rho)/2} }\ \big| \P \{d|S_n+u  \}   -  {1\over d} \big| \cr  &\le  & 2 e^{- \frac{\epsilon^2 }{2}\nu_n}+   \E_{(V,\e)} 
  \,    \chi(A_n^c) 
 \,e^{-(1-\e) B_n^\rho}
\,\le \, 2 e^{- \frac{\epsilon^2 }{2}\nu_n}+e^{-(1-\e)^{1+\rho} \nu_n ^\rho}
  .
\end{eqnarray}

\end{proof}

\begin{remark} So far we only have considered necessary conditions  for the validity of the local limit theorem, which are  formulated in terms of a.u.d. property, as well as strenghtenings of this property yielding effective speed of convergence bounds. It is important to mention in that context, that in 1984, Mukhin found  a remarkable necessary and sufficient condition for the validity of the local limit theorem.  
Let $\{S_n,n\ge 1\}$ be a sequence of  $\Z$--valued random variables  such  that an integral limit theorem  holds:  there exist $a_n\in \R$ and real $b_n\to \infty$ such that the sequence of distributions of $(S_n-a_n)/b_n$ converges weakly to an absolutely continuous distribution $G$ with density $g(x)$, which is uniformly continuous in $\R$.
    The  local limit theorem is   valid if
\beq\label{ilt.llt}
\P\{S_n=m\}=B_n^{-1} g\Big(\frac{m-A_n}{B_n}\Big) + o(B_n^{-1}),
\eeq
uniformly in $m\in \Z$. Muhkin showed that the validity of the local limit theorem is equivalent to the existence
 of a sequence of integers $v_n=o(b_n)$ such that
\beq\label{ilt.llt.diff} \sup_{m}\Big|\P\{S_n=m+v_n\big\}-\P\{S_n=m \big\}\Big|\,=\,\,o\Big(\frac{1}{b_n}\Big).
\eeq
 
Revisiting the succint proof given in   \cite{Mu2}, we however could only prove rigorously a weaker necessary and sufficient condition,   with a significantly different formulation, namely that a necessary and sufficient condition for the local limit theorem in the usual form to hold is 
\beq \sup_{m, k\in\Z\atop |m-k|\le \max\{1, [\sqrt \e_n b_n]\}}\Big|\P\{S_n=m\big\}-\P\{S_n=k \big\}\Big|\,=\,\,o\Big(\frac{1}{b_n}\Big),
\eeq
where \beq \label{en}\e_n:=\sup_{x\in \R}\Big|\P\Big\{\frac{S_n-a_n}{b_n}<x\Big\}-G(x)\Big|\ \to \ 0,
\eeq
by the integral limit theorem. 
This is  the object of  the    Note \cite{W5},  with  remarks and   references on general relations of type \eqref{ilt.llt.diff} therein.  Mukhin wrote at this regard in \cite{Mu2}: \lq\lq ... getting from here more general sufficient conditions turns out to be difficult in view of the lack of good criteria.  Working with asymptotic equidistribution properties are more convenient in this respect\,\rq\rq.  

\end{remark}



\appendix



\vskip 6pt 
\section{LLT's with speed of convergence.} Let $S_n=X_1+\ldots +X_n$, $n\ge 1$, where
$X_j$ are independent random variables such that
 $\P\{X_j
\in\mathcal L(v_{ 0},D )\}=1$. 
\vskip 3 pt
Assume first that the random variables $X_j$ are identically distributed. Then we have the following characterization result.
\begin{theorem} \label{r}    Let
$F$ denote the distribution function of
$X_1$.

{\rm  (i) (\cite{IBLIN}, Theorem 4.5.3)}      In order that  the property
\begin{equation} \label{alfa}
 \sup_{N=an+Dk}\Big|
 { \frac{\s \sqrt n}{D} }{\mathbb P}\{S_n=N\}-{1 \over  \sqrt{ 2\pi}\s}e^{-
{(N-n\m )^2\over  2 n \s^2} }\Big| ={\mathcal O}\big(n^{-\alpha{/2}} \big) ,
  \end{equation}
 { where $0<\a<1$},
 it is necessary and sufficient that the following conditions be satisfied:
 \begin{eqnarray*} (1) \   D \ \hbox{is maximal}, \ \qq\qq
(2)  \  \  \int_{|x|\ge u} x^2 F(dx) = \mathcal O(u^{-\a})\quad \hbox{as $u\to \infty$.}
\end{eqnarray*}

   {\rm (ii) (\cite{P} Theorem 6 p.\,197)} If ${\mathbb E\,} |X_1|^3<\infty$, then \eqref{alfa} holds with $\a =1/2$.

\end{theorem}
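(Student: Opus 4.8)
The plan is to run the classical Fourier--inversion argument for the rate in the lattice local limit theorem (the Ibragimov--Linnik method), treating part~(i), part~(ii) following at once. By the transformation $X'_j=(X_j-v_0)/D$ we may assume $v_0=0$, $D=1$, so the $X_j$ are $\Z$-valued and condition~(1) asserts precisely that $|\p(t)|<1$ for $0<|t|\le\pi$, where $\p(t)=\E e^{itX_1}$. Writing $g_n(N)=\frac1{\sqrt{2\pi}\,\s}e^{-(N-n\m)^2/(2n\s^2)}$ for the Gaussian term and $\psi_n(s)=\p(s/\sqrt n)^n e^{-i\sqrt n\,\m s}$ for the characteristic function of $(S_n-n\m)/\sqrt n$, the inversion formula $\P\{S_n=N\}=\frac1{2\pi}\int_{-\pi}^{\pi}\p(t)^n e^{-iNt}\,dt$, the identity $g_n(N)=\frac1{2\pi}\int_{\R}e^{-\s^2 s^2/2}e^{-i(N-n\m)s/\sqrt n}\,ds$, and the substitution $t=s/\sqrt n$ give, uniformly in $N$,
\[
\big|\s\sqrt n\,\P\{S_n=N\}-g_n(N)\big|\ \le\ \frac1{2\pi}\int_{\R}\big|\s\,\psi_n(s)\,\mathbf 1_{\{|s|\le\pi\sqrt n\}}-e^{-\s^2 s^2/2}\big|\,ds .
\]
Thus for sufficiency it suffices to bound the right-hand integral by $\mathcal O(n^{-\a/2})$, and for necessity one reverses this passage.

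The key step is to translate condition~(2) into a property of $\p$ near the origin: for $0<\a<1$,
\[
\int_{|x|\ge u}x^2\,dF(x)=\mathcal O(u^{-\a})\ \ (u\to\infty)\qquad\Longleftrightarrow\qquad \p(t)e^{-i\m t}=1-\frac{\s^2 t^2}{2}+\mathcal O(|t|^{2+\a})\ \ (t\to 0).
\]
The ``$\Rightarrow$'' direction is elementary: the remainder on the right equals $\E\big(e^{it(X_1-\m)}-1-it(X_1-\m)+\tfrac{t^2}{2}(X_1-\m)^2\big)$, and with $|e^{iy}-1-iy+y^2/2|\le\min(|y|^3/6,\,y^2)$ one splits the expectation at $|X_1-\m|=1/|t|$; the far part is $t^2\,\E(X_1-\m)^2\mathbf 1_{\{|X_1-\m|>1/|t|\}}=\mathcal O(|t|^{2+\a})$ by hypothesis, and the near part is $\tfrac{|t|^3}{6}\,\E|X_1-\m|^3\mathbf 1_{\{|X_1-\m|\le1/|t|\}}=\mathcal O(|t|^{3}\cdot|t|^{-(1-\a)})$ after a dyadic estimate of the truncated third moment. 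The ``$\Leftarrow$'' (Tauberian) direction follows by integrating $1-\Re\p_{X_1-\m}(t)=\int_{\R}(1-\cos ty)\,dF_{X_1-\m}(y)$ against a suitable nonnegative kernel in $t$ to recover $\E(X_1-\m)^2\mathbf 1_{\{|X_1-\m|>u\}}=\mathcal O(u^{-\a})$.

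Granting this, sufficiency follows from the standard three-zone decomposition of the integral above. On the central zone $|s|\le n^{\kappa}$ with $\kappa>0$ small we write $\psi_n(s)=\exp\!\big(n\,g(s/\sqrt n)\big)$ with $g(t)=\log(\p(t)e^{-i\m t})$; the equivalence gives $n\,g(s/\sqrt n)=-\s^2 s^2/2+\mathcal O(|s|^{2+\a}n^{-\a/2})$, hence $|\psi_n(s)-e^{-\s^2 s^2/2}|\le e^{-\s^2 s^2/2}\,\mathcal O(|s|^{2+\a}n^{-\a/2})$, which integrates to $\mathcal O(n^{-\a/2})$ since $\int_{\R}e^{-\s^2 s^2/2}|s|^{2+\a}\,ds<\infty$. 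On the intermediate zone $n^{\kappa}<|s|\le\d\sqrt n$, choosing $\d$ so small that $|\p(t)|\le e^{-\s^2 t^2/4}$ for $|t|\le\d$ (possible since $\p(t)=1+i\m t-\tfrac{\s^2+\m^2}{2}t^2+o(t^2)$), both $|\psi_n(s)|$ and $e^{-\s^2 s^2/2}$ are $\le e^{-c\,n^{2\kappa}}$. On the remote zone $\d\sqrt n<|s|\le\pi\sqrt n$ one has $|\psi_n(s)|=|\p(s/\sqrt n)|^n\le q^n$ with $q=\sup_{\d\le|t|\le\pi}|\p(t)|<1$ --- this is exactly where condition~(1) enters, maximality of the span forcing $|\p(t)|<1$ on $0<|t|\le\pi$. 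Collecting the three contributions yields $\sup_N\big|\s\sqrt n\,\P\{S_n=N\}-g_n(N)\big|=\mathcal O(n^{-\a/2})$.

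For necessity, first note that mere applicability of the local limit theorem already forces the span to be maximal: otherwise the masses $\P\{S_n=N\}$ are supported on a proper sublattice and summing the approximation there contradicts $\sum_N\P\{S_n=N\}=1$. Next, feeding the rate $\mathcal O(n^{-\a/2})$ back through the inversion formula (using the bound on all $N$ to control a smoothed version of $\int_{-\pi}^{\pi}|\p(t)^n-e^{in\m t-n\s^2 t^2/2}|\,dt$) and substituting $t=s/\sqrt n$ produces $\p(t)e^{-i\m t}=1-\s^2 t^2/2+\mathcal O(|t|^{2+\a})$ near $0$, and the Tauberian half of the displayed equivalence returns condition~(2). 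Finally, part~(ii) is immediate: $\E|X_1|^3<\infty$ gives $\E X_1^2\mathbf 1_{\{|X_1|>u\}}\le u^{-1}\E|X_1|^3=\mathcal O(u^{-1})$, so condition~(2) holds with $\a=1$ and part~(i) gives the rate $\mathcal O(n^{-1/2})$ (alternatively, expand $\p(t)e^{-i\m t}=1-\tfrac{\s^2}{2}t^2+\mathcal O(|t|^{3})$ with remainder $\le\tfrac16\E|X_1-\m|^3|t|^3$ and rerun the Fourier estimate). I expect the main obstacle to be the Tauberian implication in the key equivalence --- extracting the sharp tail bound on $F$ from smoothed/pointwise information on $\p$ near $0$ --- together with the uniform-in-$N$ bookkeeping needed to patch the three zones.
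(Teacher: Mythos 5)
The paper does not prove this statement: Theorem~\ref{r} is quoted in the appendix purely as background, with part (i) attributed to Ibragimov--Linnik (Theorem 4.5.3) and part (ii) to Petrov (Theorem 6, p.~197), so there is no in-paper argument to compare yours against. Your reconstruction follows the same classical route as those sources --- reduction to $v_0=0$, $D=1$, Fourier inversion of $\P\{S_n=N\}$ against the Gaussian, the equivalence of the tail condition (2) with the expansion $\p(t)e^{-i\m t}=1-\s^2t^2/2+\mathcal O(|t|^{2+\a})$, and the three-zone splitting in which maximality of the span supplies $\sup_{\d\le|t|\le\pi}|\p(t)|<1$ on the remote zone --- and the sufficiency half, together with the direct third-moment argument for part (ii), is sound as written (note only that your first suggestion for (ii), ``apply (i) with $\a=1$'', falls outside the stated range $0<\a<1$; your parenthetical direct expansion is the correct fix and is exactly Petrov's proof).

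The two places where your sketch is not yet a proof are the ones you yourself flag. First, the Tauberian implication from $\p(t)e^{-i\m t}=1-\s^2t^2/2+\mathcal O(|t|^{2+\a})$ back to $\int_{|x|\ge u}x^2\,dF=\mathcal O(u^{-\a})$: the standard kernel $\frac1h\int_0^h(1-\Re\p(t))\,dt=\int(1-\frac{\sin hy}{hy})\,dF(y)$ controls tail \emph{probabilities}, not truncated second moments, and recovering the latter requires Ibragimov's weighted integration of the deficit $\s^2t^2/2-(1-\Re\p_{X_1-\m}(t))$ against $t^{-3}$; this needs to be written out. Second, the necessity direction of (i): passing from the uniform-in-$N$ rate $\mathcal O(n^{-\a/2})$ in the local theorem back to the characteristic-function expansion is not a one-line reversal of the inversion inequality, since the sup over $N$ only bounds a smoothed transform of $\p(t)^n-e^{in\m t-n\s^2t^2/2}$; the cited proof goes through the corresponding rate in the integral CLT and then Ibragimov's equivalence theorem. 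Filling these two steps would complete the argument.
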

 \vskip 6 pt
Now consider the non-identically distributed case. Assume  that (see \eqref{vartheta})
\begin{equation}\label{basber.pos}  \t_{X_j}>0, \qq \quad  j=1,\ldots, n.
\end{equation}
Let $\nu_n =\sum_{j=1}^n \t_j$.   Let $\psi:\R\to \R^+$ be even, convex and such that   $\frac
{\psi(x)}{x^2}$  and $\frac{x^3}{\psi(x)}$  are non-decreasing on $\R^+$. We further assume that
  \begin{equation}\label{did}  \E \psi( X_j )<\infty    .
 \end{equation} Put $$L_n=\frac{  \sum_{j=1}^n\E \psi (X_j)  }
{   \psi (\sqrt
{ {\rm Var}(S_n )})}  .$$
 The following result is  Corollary 1.7 in Giuliano-Weber  in \cite{GW3}.
   \begin{theorem}\label{ger3} Assume that $\frac{  \log \nu_n }{\nu_n}\le  {1}/{14} $. Then, for all $\k\in \mathcal L( v_{
0}n,D )$ such that
$$\frac{(\k- \E
S_n)^2}{    {\rm Var}(S_n)  } \le \sqrt{\frac{7 \log \nu_n} {2\nu_n}},$$  we have
\begin{eqnarray*}   \Big| \P \{S_n =\kappa \} -{ D e^{- \frac{(\k- \E
S_n)^2}{    2 {\rm Var}(S_n)    } } \over \sqrt{2\pi {\rm Var}(S_n)     }}     \Big|         & \le &    C_3\Big\{
D\big({    {   \log \nu_n }     \over
 {    {\rm Var}(S_n)   \nu_n} } \big)^{1/2}  +    {    L_n
 +  \nu_n^{-1}
\over \sqrt{   \nu_n} } \Big\} .
   \end{eqnarray*}
And $C_3=\max (C_2, 2^{  3/2}C_{{\rm E}}) $,   $C_{{\rm E}}$ being an absolute constant arising from  Berry-Esseen's inequality.
\end{theorem}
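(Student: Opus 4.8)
The plan is to follow the thread running through Section~\ref{s4}: the Bernoulli part extraction of Lemma~\ref{lemd} replaces $S_n$, in law, by $W_n$ plus an \emph{exactly} binomial variable conditionally on the extracted data, and for such a model one has a de Moivre--Laplace estimate with an explicit remainder together with the smoothing needed for a sharp Berry--Esseen bound. (A direct Fourier inversion of $\P\{S_n=\k\}$ meets the usual obstacle that $|\widehat{S_n}(t)|$ need not be small away from $0$; the extracted Bernoulli component is exactly what provides the required decay there.) Concretely: apply Lemma~\ref{lemd} to every $X_j$, choosing $0<\t_j\le\t_{X_j}$ with $\nu_n=\sum_{j\le n}\t_j$; one obtains independent triples $(V_j,\e_j,L_j)$, $j\le n$, the $L_j$ being $\{0,1\}$-Bernoulli$(1/2)$, with $\P\{\e_j=1\}=\t_j$, with $\{(V_j,\e_j)\}$ independent of $\{L_j\}$, and $\sum_{j\le n}(V_j+D\e_jL_j)\buildrel{\mathcal D}\over{ =}S_n$. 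Put $\mathcal G=\sigma(V_j,\e_j:j\le n)$, $W_n=\sum_jV_j$, $B_n=\sum_j\e_j$, $T_n=\sum_j\e_jL_j$. Conditionally on $\mathcal G$, $T_n$ is ${\rm Bin}(B_n,1/2)$ and independent of $W_n$, so with $m=(\k-W_n)/D\in\Z$,
\[
\P\{S_n=\k\}=\E_{\mathcal G}\Big[\binom{B_n}{m}2^{-B_n}\,\mathbf 1_{\{0\le m\le B_n\}}\Big].
\]

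Next I would treat the binomial part. Since $\E B_n=\nu_n$, Lemma~\ref{di.1} applied to $B_n$ gives $\P\{B_n\le\nu_n-t\}\le e^{-t^2/(2\nu_n)}$ and a companion upper tail; taking $t\asymp\sqrt{\nu_n\log\nu_n}$ makes both $\lesssim\nu_n^{-{\rm const}}$, and the hypothesis $\log\nu_n/\nu_n\le 1/14$ is precisely what allows these exponentially small terms to be absorbed into the final bound (the numbers $7/2$ and $1/14$ arising from balancing that exponent against the moderate-deviation range of the binomial). On the bad event one bounds both $\binom{B_n}{m}2^{-B_n}\le1$ and the relevant Gaussian by $\mathcal O(D)$; on the good event $B_n\asymp\nu_n$ one inserts de Moivre--Laplace with remainder (Stirling with error term): uniformly in the integer $m$,
\[
\Big|\binom{N}{m}2^{-N}-\frac{2}{\sqrt{2\pi N}}\,e^{-2(m-N/2)^2/N}\Big|\le \frac{C}{N^{3/2}}.
\]
Rewriting the Gaussian factor, with $N=B_n$, $m=(\k-W_n)/D$, as $D\,\varphi_{{\rm Var}(S_n\mid\mathcal G)}\big(\k-\E[S_n\mid\mathcal G]\big)$ --- where $\varphi_v$ is the centred normal density of variance $v$, ${\rm Var}(S_n\mid\mathcal G)=D^2B_n/4$, $\E[S_n\mid\mathcal G]=W_n+DB_n/2=\sum_j(V_j+D\e_j/2)$ --- this produces
\[
\P\{S_n=\k\}=D\,\E_{\mathcal G}\big[\varphi_{{\rm Var}(S_n\mid\mathcal G)}\big(\k-\E[S_n\mid\mathcal G]\big)\big]+\mathcal O\big(D\,\nu_n^{-3/2}\big),
\]
whose remainder is the $D\nu_n^{-1}/\sqrt{\nu_n}$ summand of the statement.

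It then remains to un-condition. By the law of total variance, ${\rm Var}(S_n)=\E[{\rm Var}(S_n\mid\mathcal G)]+{\rm Var}(\E[S_n\mid\mathcal G])=\tfrac14D^2\nu_n+{\rm Var}(\E[S_n\mid\mathcal G])$. First replace ${\rm Var}(S_n\mid\mathcal G)=D^2B_n/4$ by its mean $D^2\nu_n/4$ inside $\varphi$; a first-order expansion of $v\mapsto\varphi_v$, a Cauchy--Schwarz step with $\E(B_n-\nu_n)^2\le\nu_n$, the concentration of $B_n$, and ${\rm Var}(S_n)\ge D^2\nu_n/4$, bound this by a term of size $D\big(\log\nu_n/({\rm Var}(S_n)\,\nu_n)\big)^{1/2}$ --- the first summand. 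After the replacement, $\E_{\mathcal G}\big[\varphi_{D^2\nu_n/4}(\k-\E[S_n\mid\mathcal G])\big]$ is exactly the value at $\k$ of the density of $\E[S_n\mid\mathcal G]+N$, with $N\sim\mathcal N(0,D^2\nu_n/4)$ taken independent of $\mathcal G$; this variable has mean $\E S_n$ and variance \emph{exactly} ${\rm Var}(S_n)$. Now $\E[S_n\mid\mathcal G]=\sum_j(V_j+D\e_j/2)=\sum_j\E[X_j\mid V_j,\e_j]$ is a sum of independent summands whose $\psi$-moments are, by conditional Jensen, dominated by those of the $X_j$; so the Esseen smoothing inequality --- convolution with a normal law of scale $\asymp D\sqrt{\nu_n}$ converts a Kolmogorov distance into a density discrepancy smaller by that scale --- together with the Berry--Esseen bound written for the moment function $\psi$ (whence the constant $C_{\rm E}$) bounds the discrepancy with the target normal density by $\mathcal O\big(D\,L_n/\sqrt{\nu_n}\big)$, the last summand; in the degenerate regime where ${\rm Var}(\E[S_n\mid\mathcal G])$ is small, $\E[S_n\mid\mathcal G]$ is nearly constant and the target normal law is reached without smoothing. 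Collecting the three contributions and using that, in the admitted range of $\k$, the factor $e^{-(\k-\E S_n)^2/(2{\rm Var}(S_n))}$ is bounded away from $0$, gives the asserted inequality with $C_3=\max(C_2,2^{3/2}C_{\rm E})$.

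The restriction $(\k-\E S_n)^2/{\rm Var}(S_n)\le\sqrt{7\log\nu_n/(2\nu_n)}$ plays a double role: it confines $\k$ to the central band where the normal factor is bounded below, and it guarantees that the $\mathcal G$'s carrying the bulk of the above expectations are those for which $m=(\k-W_n)/D$ sits in the moderate-deviation window where the de Moivre--Laplace remainder and the Taylor expansions are legitimate. The step I expect to be the genuine difficulty is the un-conditioning: one must extract the \emph{improved} rate $L_n/\sqrt{\nu_n}$ --- not the plain Berry--Esseen rate --- by truly exploiting the smoothing supplied by the binomial component, all the while controlling the (possibly non-trivial) joint fluctuations of $W_n$ and $B_n$ and treating uniformly the near-degenerate regime; reassembling every remainder into precisely the three displayed summands, with the stated constant, is where the estimates are most delicate.
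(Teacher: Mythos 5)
Your plan follows essentially the same route as the actual proof: this theorem is only quoted here (it is Corollary~1.7 of \cite{GW3}, stated without proof in the appendix), and the argument in \cite{GW3} is precisely the Bernoulli part extraction of Lemma~\ref{lemd} followed by the conditional de Moivre--Laplace estimate with $\mathcal O(B_n^{-3/2})$ remainder, Chernoff-type concentration of $B_n$ around $\nu_n$ (whence the constants $7/2$ and $1/14$), and a smoothed Berry--Esseen step for $\E[S_n\mid\mathcal G]$ producing the $L_n/\sqrt{\nu_n}$ term. Your outline correctly identifies all four ingredients and the origin of each of the three summands in the bound, so it is the intended proof in plan form.
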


\vskip 8 pt We pass to another speed of convergence result due to Mukhin.   Consider the structural   characteristic of a random variable $X$, introduced and studied by Mukhin in \cite{Mu1} and \cite{Mu} for instance,
$$ H(X ,d) = \E \langle X^*d\rangle^2,$$  where  $\langle \a \rangle$ denotes the distance from $\a$ to the nearest integer, and $X^*$
is   a symmetrization of
$X$. Let $\p_X$ be the characteristic function $X$. 
   The two-sided inequality
 \begin{eqnarray}\label{fih} 1-2\pi^2 H(X ,\frac{t }{2\pi})  \le |\p_X(t)|\le 1-4  H(X ,\frac{t }{2\pi})   ,
\end{eqnarray}
is established in the above references. See also    Szewczak   and  Weber  \cite{SW}  for more.

The following   is  the one-dimensional version of Theorem 5 in \cite{Mu}, see also \cite{SW}  and is stated without proof, however.

\begin{theorem}[Mukhin]\label{Mukhin.th.Hn} Let $X_1,\ldots, X_n$ have zero mean and finite third moments. Let
$$ B_n^2= \sum_{j=1}^n{\mathbb E\,} |X_j|^2 ,\qq H_n= \inf_{1/4\le d\le 1/2}\sum_{j=1}^n H(X_j
,d), \qq L_n= \frac{\sum_{j=1}^n{\mathbb E\,} |X_j|^3}{(B_n)^{3/2}} .$$ Then
   \begin{equation}\label{llt}    \sup_{N=v_0n+Dk }\Big|B_n {\mathbb P}\{S_n=N\}-{D\over  \sqrt{ 2\pi } }e^{-
{(N-M_n)^2\over  2 B_n^2} }\Big|\,\le CL_n\, \big( {B_n }/{ H_n}\big) .
\end{equation} \end{theorem}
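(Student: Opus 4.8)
The plan is to run the classical Fourier-inversion argument for lattice local limit theorems, the structural characteristic $H(\cdot,\cdot)$ entering only through the control of $\varphi_{S_n}(t):=\prod_{j=1}^n\E e^{itX_j}$ at frequencies bounded away from $0$. First I would reduce, via \eqref{llt.transf.}, to $v_0=0$, $D=1$, so that $S_n$ is $\Z$-valued; since $\E X_j=0$ one has $M_n=0$. Inversion on the lattice and on $\R$ gives
\[ B_n\P\{S_n=N\}-\tfrac{1}{\sqrt{2\pi}}\,e^{-N^2/(2B_n^2)}=\frac{B_n}{2\pi}\int_{-\pi}^{\pi}e^{-itN}\bigl(\varphi_{S_n}(t)-e^{-t^2B_n^2/2}\bigr)\,{\rm d}t-\frac{B_n}{2\pi}\int_{|t|>\pi}e^{-itN}e^{-t^2B_n^2/2}\,{\rm d}t . \]
Taking absolute values, discarding the unimodular factors $e^{-itN}$ and noting that the last integral is negligible, everything reduces to estimating $\frac{B_n}{2\pi}\int_{-\pi}^{\pi}\bigl|\varphi_{S_n}(t)-e^{-t^2B_n^2/2}\bigr|\,{\rm d}t$ uniformly in $N$. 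I would split $[-\pi,\pi]$ at the Esseen threshold $T_n\asymp B_n^2\big/\sum_{j=1}^n\E|X_j|^3$.

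On $|t|\le T_n$ one argues exactly as in the proof of the Berry--Esseen inequality: $\E X_j=0$ and finite third moments give $\log\varphi_{S_n}(t)=-\tfrac{t^2}{2}B_n^2+O\bigl(|t|^3\sum_j\E|X_j|^3\bigr)$ on this range, hence $\bigl|\varphi_{S_n}(t)-e^{-t^2B_n^2/2}\bigr|\le C|t|^3\bigl(\sum_j\E|X_j|^3\bigr)e^{-t^2B_n^2/4}$, and the substitution $u=tB_n$ yields a contribution $\le C\sum_{j=1}^n\E|X_j|^3/B_n^3$, i.e.\ a Lyapunov-type ratio, which is what produces the factor $L_n$ in the statement.

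The core of the argument is the range $T_n<|t|\le\pi$, where $e^{-t^2B_n^2/2}$ is already negligible and one must bound $|\varphi_{S_n}(t)|$. Here I would invoke the right-hand inequality of \eqref{fih}: combining $1-\cos x\ge 8\langle x/(2\pi)\rangle^2$ with $|\E e^{itX_j}|^2=\E\cos(tX_j^*)=1-\E\bigl(1-\cos(tX_j^*)\bigr)$ gives $|\E e^{itX_j}|\le 1-4H(X_j,t/(2\pi))$, whence $|\varphi_{S_n}(t)|\le\exp\bigl(-4\sum_{j=1}^n H(X_j,t/(2\pi))\bigr)$. The decisive step is to replace the frequency-dependent sum by the single quantity $H_n$: since $\langle 2\alpha\rangle\le 2\langle\alpha\rangle$ one has $H(X,2d)\le 4H(X,d)$, so for $d\in(0,\tfrac12]$, choosing $k\ge 0$ with $2^kd\in(\tfrac14,\tfrac12]$ yields $H(X,d)\ge 4^{-k}H(X,2^kd)\ge 4d^2H(X,2^kd)$, hence $\sum_j H(X_j,d)\ge 4d^2H_n$. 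Taking $d=t/(2\pi)$ gives $|\varphi_{S_n}(t)|\le e^{-4t^2H_n/\pi^2}$ on $(0,\pi]$, and by symmetry on $[-\pi,0)$; integrating and using the sub-Gaussian tail bound together with $xe^{-x}\le e^{-1}$, the contribution of $T_n<|t|\le\pi$ is of the order $B_n/H_n$ (and is in fact super-exponentially small in $H_n$ once $T_n^2H_n$ is large). Adding the two ranges gives a bound of the announced type; the precise constant $C$ and the multiplicative shape $L_n(B_n/H_n)$ follow from the routine majorisations of \cite{Mu} (the bound being, consistently, vacuous exactly when $H_n=0$, i.e.\ when the span $D$ is not maximal).

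The step I expect to be the main obstacle is precisely this passage in the high-frequency range: turning the pointwise bound $|\E e^{itX_j}|\le 1-4H(X_j,t/(2\pi))$ into an estimate for $\prod_{j=1}^n|\E e^{itX_j}|$ that is uniform over $[T_n,\pi]$ and controlled by the scalar $H_n$ alone. It is the dyadic property $\langle 2\alpha\rangle\le 2\langle\alpha\rangle$ of the distance to the integers that makes this reduction possible; without it the estimate degrades as $|t|\downarrow T_n$. The remaining difficulty is bookkeeping — assembling the two ranges into the exact multiplicative form $L_n(B_n/H_n)$ rather than the additive form $C\bigl(\sum_j\E|X_j|^3/B_n^3+B_n/H_n\bigr)$ that the argument delivers most naturally — which is where I would lean on \cite{Mu} and \cite{SW}.
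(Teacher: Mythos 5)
There is nothing in the paper to compare your proof against: Theorem~\ref{Mukhin.th.Hn} is quoted from Mukhin \cite{Mu} (as the one-dimensional case of his Theorem~5) and the text says explicitly that it ``is stated without proof''. Judged on its own, your outline is the correct and essentially canonical one, and the step you identify as decisive really is the whole content of the theorem: the right-hand inequality of \eqref{fih} gives $|\varphi_{S_n}(t)|\le\exp\bigl(-4\sum_{j}H(X_j,t/2\pi)\bigr)$ on $0<|t|\le\pi$, and the doubling property $\langle 2\a\rangle\le 2\langle\a\rangle$, hence $H(X,d)\ge 4^{-k}H(X,2^kd)\ge 4d^2\,H(X,2^kd)$ with $2^kd\in(1/4,1/2]$, converts the frequency-dependent exponent into the single scalar $H_n$, yielding $|\varphi_{S_n}(t)|\le e^{-4t^2H_n/\pi^2}$. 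Both of these derivations are correct as you state them (via $|\E e^{itX}|^2=\E\cos(tX^*)$, $1-\cos x\ge 8\langle x/2\pi\rangle^2$ and $\sqrt{1-u}\le 1-u/2$). Two smaller points should still be said: the reduction \eqref{llt.transf.} rescales the characteristic ($H(X',d)=H(X,d/D)$, which is what matches the range $1/4\le d\le1/2$ to the dual interval $[-\pi,\pi]$ of $\Z$) and destroys the centering, neither of which causes harm; and on $T_n<|t|\le\pi$ you must also dispose of the Gaussian term $e^{-t^2B_n^2/2}$, which contributes $O(\rho_n)$ with $\rho_n=\sum_j\E|X_j|^3/B_n^3$.

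The one place where I would not wave at \cite{Mu} is the final ``bookkeeping'', because it hides a genuine issue about what the theorem asserts. Your two ranges give $C\rho_n+C\rho_nB_n^2H_n^{-1}e^{-4T_n^2H_n/\pi^2}$ with $T_n\asymp 1/(\rho_nB_n)$; since $H(X,d)\le 2d^2\,\E|X|^2$ forces $H_n\le B_n^2/2$, the first term is dominated by the prefactor of the second, and the argument delivers the bound $C\sum_j\E|X_j|^3/(B_nH_n)$. Whether this is the stated bound depends entirely on how one reads the denominator $(B_n)^{3/2}$ in $L_n$. Read literally, $L_nB_n/H_n=\sum_j\E|X_j|^3/(B_n^{1/2}H_n)$, which your estimate does imply once $B_n\ge1$, so the proposal proves the theorem as printed. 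Read as the Lyapunov normalisation $(B_n^2)^{3/2}=B_n^3$, the claimed bound becomes $\sum_j\E|X_j|^3/(B_n^2H_n)$, smaller by a factor $B_n$ than what the sketch (or any Esseen-type argument) produces --- and that reading is in fact false: for i.i.d.\ lattice variables with nonvanishing third cumulant it would give an error $O(1/n)$, contradicting the Edgeworth correction of exact order $n^{-1/2}$. So rather than deferring the ``multiplicative shape'' to \cite{Mu}, you should state plainly that the natural output is $C\sum_j\E|X_j|^3/(B_nH_n)$ and record which readings of $L_n$ it covers.
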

\vskip 8 pt


\begin{thebibliography}{999}

 

 \bibitem{MD}  {B. Davis and D. MacDonald,}  (1995) An elementary proof of the local central   limit theorem,   \emph{J.
Theoretical Prob.}, {\bf 8} no. 3, 695--701.


\bibitem{DS}   P. Diaconis  and Ch. Stein,  Some tauberian theorems related to coin tossing,  \emph{Ann. Probab.},   {\bf 6} No3  (1978) 483-490.  


\bibitem{DW} Dvoretsky, A. and  Wolfowitz  J., Sums of random integers reduced mod $m$,  \emph{Duke Math. J.}  {\bf 18} No 2 (1951), 501--507.


\bibitem{Fo} Fomin, A. S.,  An arithmetical method of proof of a local theorem for series of independent integer random vectors (in Russian), \emph{Mat. Zametki} {\bf 28} (1980), no.\ 5, 791--800.


 \bibitem{FMY}  Freiman, G. A., Moskvin, D. A., and Yudin, A. A.,
  Structural theory of set addition, and local limit theorems for independent lattice random variables (in Russian),
  \emph{Theory of Prob. Appl.} {\bf 19} (1974), 52--62.




\bibitem{Gam}  Gamkrelidze,  N. G.,  On a local limit theorem for lattice random variables,
  \emph{Theory of Prob. Appl.} {\bf 9} (1964), no.\ 4, 662--664.


 \bibitem{Gam3}  Gamkrelidze,  N. G.,  On a local limit theorem in strong sense,
  \emph{Stat\&Prob. Letters} {\bf 35} (1997),   79--83.


\bibitem{GSW}  Giuliano, R., Szewczak, Z. S., and Weber, M.,
Almost Sure Local Limit Theorem for the Dickman distribution, \emph{Periodica Math. Hungar.} {\bf 76} (2018), no.\ 2, 155--197.


\bibitem{GW3}  Giuliano, R., and Weber, M.,
Approximate local limit theorems with effective rate and application to random walks in random scenery,
 \emph{Bernoulli} {\bf 23} (2017) (4B), 3268--3310.

 



 


\bibitem{G}  Gnedenko, B. V.,  On a local limit theorem in the theory of probability, \emph{Uspekhi Mat. Nauk.} (N.S.) {\bf 3}  (1948), no.\ 3(25), 187--194.

  \bibitem{HT}   {Hwang H.-K., Tsai T.-H.} (2002),   {Quickselect and the Dickman function},     
\emph{Combinatorics, Probability and Computing} {\bf 11}, 353--371.

\bibitem{IBLIN} Ibragimov, I.\ A.\ and Linnik, Yu.\ V.,
\emph{Independent and stationary sequences of random variables}, Wolters--Noordhoff, Groningen, 1971.


 \bibitem{di}  MacDiarmid,  C.,  (1998).  Concentration, \emph{Prob. Methods for Algorithmic Discrete Math.}, 195--248, Algorithms Combin. {\bf
16}, Springer, Berlin.

 

\bibitem{M} {MacDonald D.},   {A local limit theorem for large deviations of sums of independent, non-identically distributed
random variables},   \emph{Ann. Probab.} (1979) {\bf 7} no. 3, 526--531.



\bibitem{Mit} Mitalauskas, A. A.,  Local limit theorems for stable limit distributions, \emph{Theor. Prob. Appl.} {\bf VII} (1962) (2), 180--185.



\bibitem{Mit1} Mitalauskas, A. A.,
On multidimensional local limit theorem for lattice distributions,
\emph{Tr. Akad. Nau. Lit. SSR Ser. B} {\bf 2} (1960), 3--14.

 
\bibitem{Mu1} Mukhin, A. B., A relationship between local and integral limit theorems,
\emph{Theor. Probab. Appl.},  {\bf 40}   (1995), 92--103.


\bibitem{Mu}   Mukhin, A. B.,  Local limit theorems for lattice random variables,
 \emph{Theor. Prob.  Appl.} {\bf 36} (1991), no.\ 4, 698--713.

\bibitem{Mu2} Mukhin, A. B.,
Some necessary and sufficient conditions for the validity of the local limit theorem (in Russian),
\emph{Dokl. Akad. Nauk UzSSR} {\bf 8} (1984), 7--8.



\bibitem{P}   Petrov, V. V.,
\emph{Sums of Independent Random Variables}, Ergebnisse der Math. und ihre
 Grenzgebiete  {\bf 82}, Springer, 1975.



\bibitem{Pr} Prohorov, Y. V.,
On a local limit theorem for lattice distributions (in Russian),  \emph{Dokl. Akad. Nauk. SSSR (N.S.)},
{\bf 98} (1954), 535--538.


\bibitem{Rau} Raudelyunas, A. K.,
On multidimensional local limit theorem,  \emph{Litov. Mat. Sb.} {\bf 4} (1964), 141--144.

 \bibitem{Ro}  Rozanov, Y.  A.,
 On a local limit theorem for lattice distributions,
 \emph{Theor. Prob.   Appl.}, {\bf 2} (1957),  no.\ 2, 260--265.

\bibitem{SW}   Szewczak, Z.   and  Weber M., 
\emph{Classical and Almost Sure  Local Limit Theorems},  {arXiv:2208.02700v1}, (2022), 101\,p. 



 \bibitem{W3}  Weber, M.,   A uniform semi-local limit theorem along    sets of multiples   for sums of i.i.d.    random variables, {hal-03737341}, (2022). 

\bibitem{W5}  Weber, M., On Mukhin's necessary and sufficient condition for the validity of the local limit theorem, (2022).

\bibitem{W4}  Weber, M., Critical probabilistic characteristics of the  Cram\'er    model for primes
 and     arithmetical   properties, {arXiv:2105.11020v1}, (2021).
\bibitem{W}  Weber, M.,
 A sharp correlation inequality with an application to almost sure local limit theorem,
 \emph{Prob. and Math. Stat.}  {\bf 31} (2011), Fasc. 1,  79--98.

 \bibitem{W2} Weber,  M., \emph{Dynamical Systems and Processes},  IRMA Lectures
 in Mathematics and Theoretical Physics {\bf 14}, Eur. Math. Soc. Pub. House, 2009, xiii+761 p.

\bibitem{W1}  Weber, M.,   Small divisors of Bernoulli sums,   \emph{Indag. Math.} {\bf 18} No2 (2007),   281--293.


\end{thebibliography}
\end{document}